\newtheorem{theorem}{Theorem}[section]
\newtheorem{lemma}[theorem]{Lemma}
\newtheorem{proposition}[theorem]{Proposition}
\newtheorem{proof}[theorem]{Proof}
\numberwithin{equation}{section}
\begin{document}
\pagestyle{myheadings}
\markboth{\centerline{Arnasli Yahya}}
{On problem of best circle}
\title
{On Problem of Best Circle to Discontinuous Groups in Hyperbolic Plane
\footnote{Mathematics Subject Classification 2010: 20F67; 51M09. \newline
Key words and phrases: Inscribed circle, Poincare-Delone (Delaunay) Problem, Discontinuous group, Hyperbolic plane \newline
}}
\author{Arnasli Yahya \\
\normalsize Budapest University of Technology and Economics\\
\normalsize{Institute of Mathematics},
\normalsize Department of Geometry, \\
\normalsize Műegyetem rkp. 3.,H-1111 Budapest, Hungary,  \\
\normalsize arnasli@math.bme.hu,
\date{\normalsize{\today}}}
\maketitle
\begin{abstract}
The aim of this paper is to describe the largest inscribed circle into the fundamental domains of a discontinuous group in Bolyai-Lobachevsky hyperbolic plane. We give some known basic facts related to the Poincare-Delone problem and the existence notion of the inscribed circle. We study the best circle of the group $G=[3,3,3,3]$ with 4 rotational centers each of order 3. Using the Lagrange multiplier method, we would describe the characteristic of the best-inscribed circle. The method could be applied for the more general case in $G=[3,3,3,\cdots, 3]$ with $l \geq 4$ rotational centers each of order 3, by more and more computations. We observed by a more geometric Theorem 2 that the maximum radius is attained by equalizing the angles at equivalent centers and the additional vertices with trivial stabilizers, respectively. Theorem 3 will close our arguments where Lemma 3 and 4 play key roles.
\end{abstract}
\newpage
\section{Introduction}
The 17 crystal groups on Euclidean plane $\mathbb{E}^2$ have long been known (as an intuitive discovery of medieval Islamic art e.g. the artistic mosaics of Alhambra in Granada, Spain). B.N.Delone (Delaunay) described the 46 types of their fundamental domains only in 1959, see \cite{Delone}. H. Poincare in 1882 had already attempted to describe the analogous plane groups in Bolyai-Lobachevsky hyperbolic plane $\mathbb{H}^2$. A significant result of A,M. Macbeath was the description of algebraic combinatorial classification of Non-Euclidean plane crystallographic groups with compact quotient space by their signature, see \cite{Macbeath} and \cite{molnar2019}.  \\
In this paper, we would like to determine the best circle inscribed in the fundamental domain of a given discontinuous group in hyperbolic plane $\mathbb{H}^{2}$. This problem was actually raised by Prof. Emil Moln\'{a}r in \cite{molnar2019} on the base of \cite{luvcic1991}. The fundamental domain for planar discontinuous groups and uniform tilings was studied by Lu\v{c}i\'{c} and Moln\'{a}r in \cite{luvcic1991, luvcic1990combinatorial}. The algorithm for classification of fundamental polygons for a given discontinuous group was also presented by Lu\v{c}i\'{c}, Moln\'{a}r and Vasiljevi\'{c} in \cite{luvcic2018}. We are interested in the theorem, see \cite{luvcic1991}, as follows
\begin{theorem}[Lu\v{c}i\'{c}-Moln\'{a}r]\label{ExistenceInball}
Among all convex polygons in $\mathbb{E}^2$, $\mathbb{S}^2$, and $\mathbb{H}^2$ with given angles $\alpha_1, \alpha_2, \cdots, \alpha_m, m \geq 3$, there exists up to similarity (for $\mathbb{E}^2$) and up to an isometry (for $\mathbb{S}^2$ and $\mathbb{H}^2$), respecting the order of the angles, exactly one circumscribing a circle.
\end{theorem}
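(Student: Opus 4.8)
The plan is to reduce the problem to a single scalar equation in the inradius by exploiting the kite decomposition of a tangential polygon. Suppose a convex polygon with the prescribed angles $\alpha_1,\dots,\alpha_m$ (in the given cyclic order) circumscribes a circle of radius $r$ centred at $O$, touching the sides at contact points $T_1,\dots,T_m$. At each vertex $V_i$ the two tangent segments to the adjacent contact points are congruent, so the ray $OV_i$ bisects the interior angle $\alpha_i$, and the two right triangles $O T' V_i$, $O T'' V_i$ (right angles at the contact points, leg $r$, vertex angle $\alpha_i/2$) are congruent. Writing $\delta_i$ for the angle these triangles subtend at $O$, the contact points adjacent to $V_i$ span a central angle $2\delta_i$, and since the contact points run once around $O$ we obtain the closing condition $\sum_{i=1}^m \delta_i = \pi$.

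The next step is to express $\delta_i$ through $\alpha_i$ and $r$ by the appropriate right-triangle trigonometry in each geometry. In the right triangle with the right angle at the contact point, the leg $r$ is opposite the angle $\alpha_i/2$ and the hypotenuse is $OV_i$; this yields $\delta_i = \pi/2 - \alpha_i/2$ in $\mathbb{E}^2$, $\sin\delta_i = \cos(\alpha_i/2)/\cosh r$ in $\mathbb{H}^2$, and $\sin\delta_i = \cos(\alpha_i/2)/\cos r$ in $\mathbb{S}^2$. Substituting into the closing condition defines $F(r) := \sum_{i=1}^m \delta_i(r)$, and the whole problem collapses to solving $F(r) = \pi$ with each $\delta_i \in (0,\pi/2)$ (which also forces $\alpha_i \in (0,\pi)$ and hence convexity).

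Existence and uniqueness then follow from monotonicity. In $\mathbb{H}^2$, $\cosh r$ is strictly increasing, so each $\delta_i(r)$ is strictly decreasing, and $F$ runs continuously and strictly monotonically from $F(0^+) = m\pi/2 - \tfrac12\sum\alpha_i$ down to $0$; the intermediate value theorem then gives a unique root $r^\ast$ precisely when $\sum\alpha_i < (m-2)\pi$, which is exactly the angle-sum inequality under which a hyperbolic polygon with these angles exists. In $\mathbb{S}^2$ the same argument runs with $F$ strictly increasing; in $\mathbb{E}^2$ the angles $\delta_i$ are independent of $r$, so $F \equiv \pi$ is equivalent to the Euclidean angle-sum $\sum\alpha_i = (m-2)\pi$ and every $r$ works, which is exactly why uniqueness there is only up to similarity.

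Finally I would reconstruct the polygon from the root $r^\ast$: the central angles $2\delta_i(r^\ast)$, laid off consecutively around $O$ in the prescribed order of the $\alpha_i$, fix the contact points up to a rotation of the configuration, and the tangent lines at these points cut out a convex polygon with exactly the required angles; this realizes the order-respecting claim and shows the result is unique up to an isometry (up to a similarity for $\mathbb{E}^2$). The main obstacle I anticipate is the spherical case: there each $\delta_i(r)$ is only defined while $\cos r \ge \cos(\alpha_i/2)$, so I must check that the strictly increasing $F$ attains the value $\pi$ within this admissible window before any $\delta_i$ degenerates, rather than running out of range; the hyperbolic and Euclidean cases are comparatively clean once the monotonicity of $F$ is established.
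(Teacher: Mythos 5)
Your proposal follows the same skeleton as the paper's proof: the kite decomposition into right triangles at the contact points, the relation $\sin\delta_i=\cos(\alpha_i/2)/\cosh r$ (your $\delta_i$ is the paper's $\beta_i/2$, your closing condition $\sum\delta_i=\pi$ is its $\sum\beta_i=2\pi$), and the reduction to a single scalar equation in $r$ solved by the intermediate value theorem. But your version is genuinely stronger in three respects, and the comparison is worth recording. First, you prove strict monotonicity of $F(r)$, which delivers the \emph{uniqueness} half of ``exactly one''; the paper's printed proof is existence-only and never addresses why the root, hence the polygon up to isometry, is unique. Second, you send $r\to\infty$ instead of exhibiting a finite $x_0$ with $S(x_0)<0$; this incidentally avoids a slip in the paper, whose choice $\cosh x_0>1/\sin\left(\frac{2\pi}{m}\right)$ only yields $S(x_0)<2\pi$ by its own chain of estimates (each term is bounded by $2\cdot\frac{2\pi}{m}$, summing to $4\pi$), whereas the bound needed is $\cosh x_0>1/\sin\left(\frac{\pi}{m}\right)$ --- monotonicity plus the limit at infinity makes this bookkeeping unnecessary. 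Third, you treat all three geometries and make the reconstruction of the polygon from the root explicit (tangent lines at contact points laid off with central angles $2\delta_i(r^\ast)$), while the paper proves only the hyperbolic case and leaves the reconstruction implicit. The one place your argument is not yet complete is the one you flag yourself: in $\mathbb{S}^2$ you must verify that the increasing $F$ reaches $\pi$ before $r$ exits the admissible window $r\le\min_i\alpha_i/2$ where every $\delta_i$ is defined; since the paper does not attempt the spherical case at all, this gap does not put you behind it, but it does need to be closed before the full statement is proved.
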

\noindent
That theorem will guarantee the existence of the inscribed circle in a fundamental domain for given angles. We shall determine the best circle, that is the inscribed circle with the largest radius into fundamental domains determined by a discontinuous group in $\mathbb{H}^2$.\\
In this first section we study a typical case, the hyperbolic plane group $G=[3,3,3,3]$ with 4 rotation centers of order 3 in $\mathbb{H}^2$, their fundamental domains and its representation in the tree graphs, see Fig.\ref{TreeGraph}. Basically, these tree graphs are topological images of the fundamental domain under the canonical projection mapping $\kappa:\mathbb{H}^2 \longrightarrow \mathbb{H}^2 / G$,~~  $X \longmapsto \bar{X} := X^{G}$, or simply, $\kappa$ identifies all points which form the same orbit by this group. Otherwise, to obtain the topological fundamental domain, we imagine a scissor dissecting these tree graphs, and open it up (or unfold) through the fault to construct the pre-image fundamental domain.
In section \ref{sectionLagrange} we shall consider the constrained optimum problem and apply the Lagrange multiplier method to find the solution. We will present the sufficient conditions for the local maximum points through second derivative method, called bordered Hessian criterion.
In section \ref{Other_Possibility} we use the optimality condition based on section \ref{sectionLagrange} to determine the optimum incircle radius of the other type of $G$. Moreover, we also describe the optimum condition geometrically in section \ref{Conclussion_G4}.\\
In section \ref{Generalization}, we develop the method to more general $G=[3,3,3,\cdots,3]$ of $l$ rotational points, where $l \geq 4$. All types of fundamental domains are characterized combinatorially by a Diophantine equation system. Based on these constructions, we will show the global optimum of inscribed circle radius of all fundamental domain types of $G$. We also provide an important fact on the area of the fundamental domain of all types.
Now, as a motivation, we begin with recalling the proof of the Theorem \ref{ExistenceInball} in hyperbolic plane $\mathbb{H}^2$. \\
\noindent \textbf{\textit{Proof of Theorem \ref{ExistenceInball} (for hyperbolic case)}}\\
Given $p$ is a polygon with given angles $\alpha_1, \alpha_2, \cdots, \alpha_m \in (0, \frac{\pi}{2})$, near vertices $A_1,A_2, \cdots, A_m$ which is circumscribed around a circle $k(X,x)$. Let $B_1, B_2, \cdots,$\\$ B_m$ be the set of points of tangency of $p$ and $k$, such that the angles $B_m X B_1$, $B_1XB_2$, $\cdots$, $B_{m-1}$ are equal to $\beta_1, \beta_2, \cdots, \beta_m$. Then, $\beta_1 + \beta_2 + \cdots + \beta_m=2\pi$.\\
By applying trigonometry to the rectangular central triangles $XA_i B_i$ we obtain the formula (in $\mathbb{H}^2$)
\begin{equation} \label{key1}
    \cos{\left( \frac{\alpha_i}{2}\right)}=\cosh{x} \sin{\left( \frac{\beta_i}{2} \right)}
\end{equation}
Therefore 
\begin{equation*}
    \frac{\cos{\left( \frac{\alpha_1}{2}\right)}}{\sin{\left( \frac{\beta_1}{2} \right)}}= \frac{\cos{\left( \frac{\alpha_2}{2}\right)}}{\sin{\left( \frac{\beta_2}{2} \right)}} =\cdots= \frac{\cos{\left( \frac{\alpha_m}{2}\right)}}{\sin{\left( \frac{\beta_m}{2} \right)}}=\cosh{x},
\end{equation*}
for a factor $\cosh{x}>1$ is necessary for $p$ in $\mathbb{H}^2$.\\
The existence of $x$ and also $\beta_i$, such that $\sum_{i}\beta_i=2\pi$, can be shown as follows.\\
Consider $\displaystyle{\cos{\left( \frac{\alpha_1}{2}\right)}, \cos{\left( \frac{\alpha_2}{2}\right)}, \cdots, \cos{\left( \frac{\alpha_m}{2}\right)}}$\\
From \ref{key1}, we have $\beta_i=2 \sin^{-1}\left(\frac{\cos{\left( \frac{\alpha_i}{2} \right)}}{\cosh{x}} \right)$.\\
Now, consider the following continuous function
\begin{equation}
    S(x)=\left(\sum_{i}^{m} 2 \sin^{-1}\left(\frac{\cos{\left( \frac{\alpha_i}{2} \right)}}{\cosh{x}} \right) \right)-2\pi ~~~~x \in (0, \infty)
\end{equation}
\begin{align*}
    S(0)&=\left(\sum_{i}^{m} 2 \sin^{-1}\left(\cos{\left( \frac{\alpha_i}{2} \right)} \right) \right)-2\pi =(m-2)\pi-(\alpha_1 + \alpha_2 + \cdots + \alpha_m)>0 \\ &(\text{since}~ \alpha_1 + \cdots + \alpha_m < (m-2) \pi ~\text{on}~ \mathbb{H}^2)
\end{align*}
We choose $x_0$, such that $\cosh{x_0} > \frac{1}{\sin{\left( \frac{2 \pi}{m}\right)}}$.
\begin{align*}
    S(x_0)&=\left(\sum_{i}^{m} 2 \sin^{-1}\left(\frac{\cos{\left( \frac{\alpha_i}{2} \right)}}{\cosh{x_0}} \right) \right)-2\pi \\ &<\left(\sum_{i}^{m} 2 \sin^{-1}\left(\cos{\left( \frac{\alpha_i}{2} \right)} \sin{\left( \frac{2 \pi}{m}\right)}  \right) \right)-2\pi\\
    &<\left(\sum_{i}^{m} 2 \sin^{-1}\left( \sin{\left( \frac{2 \pi}{m}\right)}  \right) \right)-2\pi = 0.
\end{align*}
We see that the function $S$ change sign in $[0,x_0]$. Since $S$ is continuous, by the intermediate value theorem, there is a value $r \in [0,x_0]$, such that $S(r)=0$.
In other words $\left(\sum_{i}^{m} 2 \sin^{-1}\left(\frac{\cos{\left( \frac{\alpha_i}{2} \right)}}{\cosh{r}} \right) \right)=2\pi$.
Hence, the inscribed circle radius is $x=r$ with the corresponding central angles $\beta_i$ satisfying $\beta_1 + \beta_2 + \cdots + \beta_m=2\pi$ $\square$

\subsection{The Hyperbolic Plane Group $G=[3,3,3,3]$}
As a typical example the group $G=[3,3,3,3]$ contains exactly 4 rotational centers each of order 3 on a topological sphere. The tree surface graphs from $G=[3,3,3,3]$ are presented in Fig.\ref{TreeGraph}. There are 5 types of graphs that represent the fundamental domains of $G$. We could construct fundamental domains based on these tree graphs. The complete corresponding fundamental domains are sketchily given in Fig.\ref{Fundamentals}.
\subsubsection{Type-5 fundamental domain}
We would like to find the best-inscribed circle into the fundamental domain of the above hyperbolic plane group G. We are first focused on the type-5 fundamental domain. Since this type has the most edges, we guess that the largest circle radius would be attained in this type.
\begin{figure}[h!]
\centering
\includegraphics[scale=0.40]{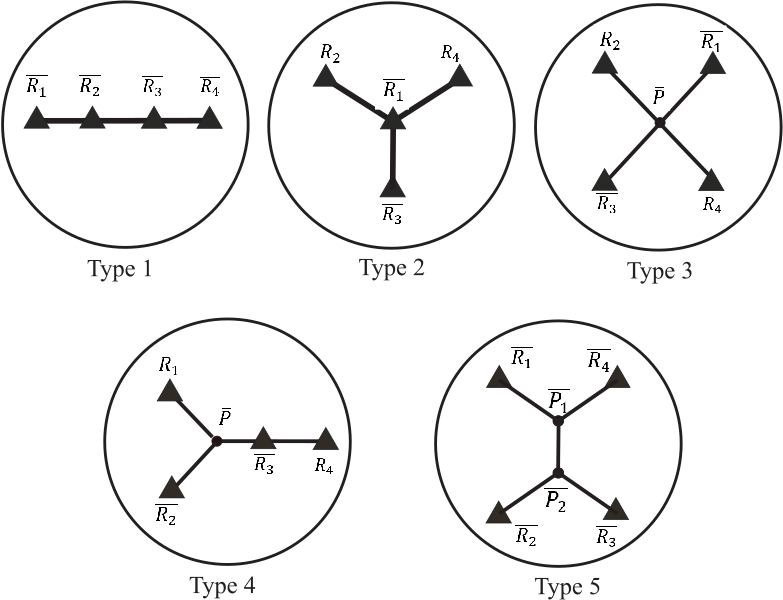}\\~\\~\\
\caption{All together: 5 types of tree surface graphs of fundamental domains for $G=[3,3,3,3]$ on a sphere}
\label{TreeGraph}
\end{figure}
\begin{figure}[h!]
\centering
\includegraphics[scale=0.45]{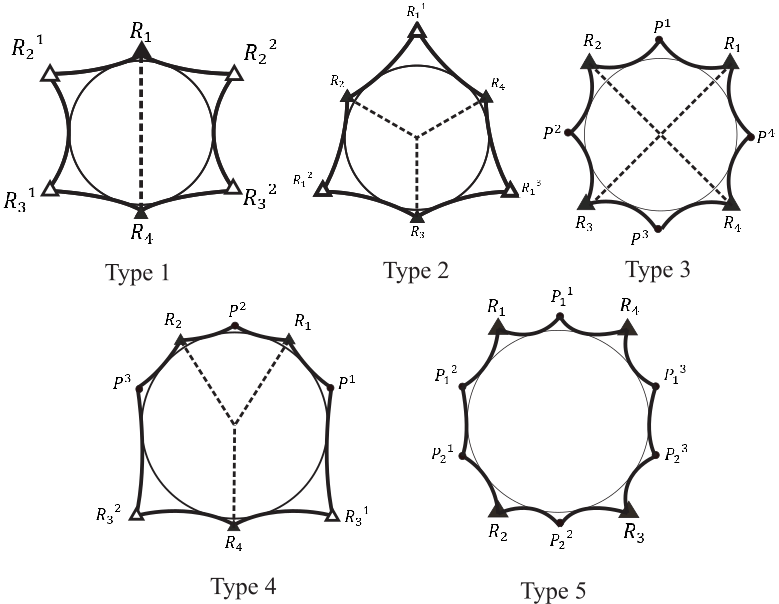}\\~\\~\\
\caption{All together: 5 types of sketchy fundamental domains for $G=[3,3,3,3]$}
\label{Fundamentals}
\end{figure}
This tree graph on the sphere is the surface diagram of the conjectured optimal fundamental domain of G=[3,3,3,3] given by its Conway-Macbeath signature. This diagram is a tree graph on a topological elastic sphere with the given 3-centers as $4=m$ vertices, each of valence (degree) 1 and $2=y$ additional vertices, each of valence 3. Then imagine a scissor we take, and cut the sphere along this tree graph to obtain a topological domain with the later metrical properties. Then the number of vertices is $6=v$, and the number of edges is $5=e$. The criterion of a tree: $v=e+1$ is fulfilled. We get a fundamental polygon of $m*1+y*3=10$ vertices (and sides), as in Fig.\ref{Treegraphtype2}.
To give more details, see Fig.\ref{Treegraphtype2}, we dissect the tree graph of type-5 through directions: $\bar{P_1} \rightarrow \bar{R_1} \rightarrow  \bar{P_1} \rightarrow \bar{P_2} \rightarrow \bar{R_2}  \rightarrow \bar{P_2} \rightarrow \bar{R_3} \rightarrow   \bar{P_2} \rightarrow  \bar{P_1} \rightarrow \bar{R_4} \rightarrow \bar{P_1}$. Then we denote the future angles $\alpha_1, \alpha_2, \alpha_6$ at vertex $\bar{P_1}$ and $\alpha_3, \alpha_4, \alpha_5$ at vertex $\bar{P_2}$, Fig. \ref{Treegraphtype2}.
We construct the fundamental domain by opening up the dissected elastic tree surface graph. As a result, we obtain type-5 fundamental domain as shown in Fig.\ref{Fundamentals}-\ref{Fundamentaldomain5}.
\begin{figure}[h!]
\centering
\includegraphics[scale=0.35]{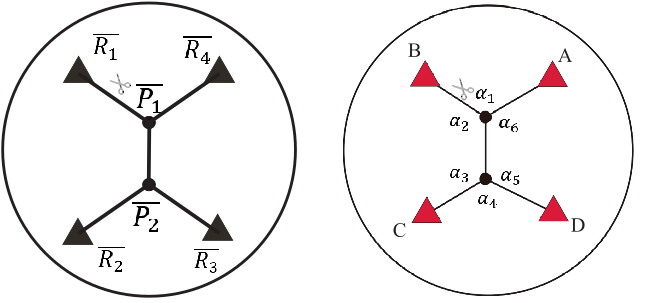}
\caption{Type-5 tree surface graph of $G=[3,3,3,3]$ is dissected by a scissor with orientation $\bar{P_1} \rightarrow \bar{R_1} \rightarrow  \bar{P_1} \rightarrow \bar{P_2} \rightarrow \bar{R_2}  \rightarrow \bar{P_2} \rightarrow \bar{R_3} \rightarrow   \bar{P_2} \rightarrow  \bar{P_1} \rightarrow \bar{R_4} \rightarrow \bar{P_1}  $ }
\label{Treegraphtype2}
\end{figure}
\begin{figure}[h!]
\centering
\includegraphics[scale=0.50]{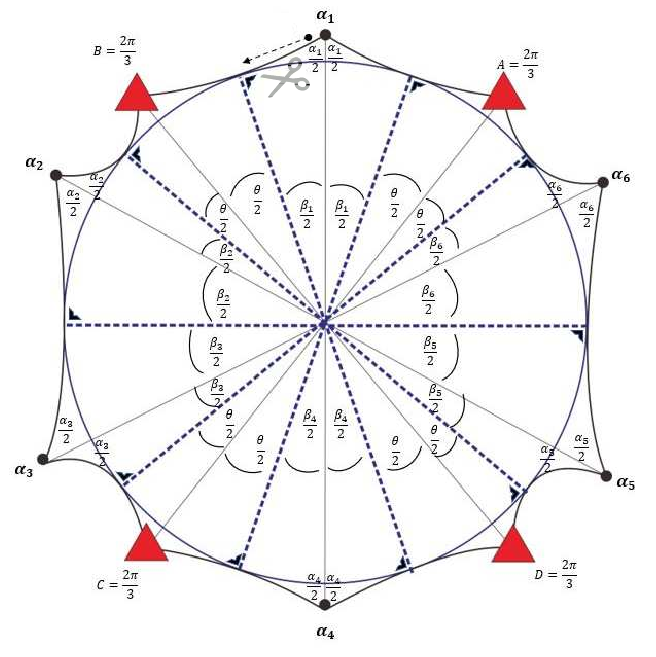}
\caption{Type-5 fundamental domain of $G=[3,3,3,3]$. Imagine also later on for $G=[3,3,3,\cdots,3]$ ($l$-times).}
\label{Fundamentaldomain5}
\end{figure}
We have some metrical properties as presented in equation system  $(\ref{cons_g1}) \cdots (\ref{cons_h2})$.
\begin{align}
    \cos{\left(\frac{{\alpha}_1}{2}\right)}=\cosh{x}  ~\sin{\left(\frac{{\beta}_1}{2}\right)} \label{cons_g1}\\ 
    \cos{\left(\frac{{\alpha}_2}{2}\right)}=\cosh{x} ~\sin{\left(\frac{{\beta}_2}{2}\right)}\label{cons_g2}\\
    \cos{\left(\frac{{\alpha}_3}{2}\right)}=\cosh{x} ~\sin{\left(\frac{{\beta}_3}{2}\right)}\label{cons_g3} \\
    \cos{\left(\frac{{\alpha}_4}{2}\right)}=\cosh{x} ~\sin{\left(\frac{{\beta}_4}{2}\right)}\label{cons_g4}\\
    \cos{\left(\frac{{\alpha}_5}{2}\right)}=\cosh{x} ~\sin{\left(\frac{{\beta}_5}{2}\right)}\label{cons_g5}\\
    \cos{\left(\frac{{\alpha}_6}{2}\right)}=\cosh{x} ~\sin{\left(\frac{{\beta}_6}{2}\right)}\label{cons_g6}\\
    \cos{\left(\frac{ \pi}{3}\right)}=\cosh{x} ~\sin{\left(\frac{\theta}{2}\right)}\label{cons_g7}
\end{align}
where 
\begin{align}
    \sum_{i=1}^{6} \beta_{i}~+~ 4 \theta =2 \pi \label{cons_h}\\
    \alpha_{1}+\alpha_{2}+\alpha_{6}=2 \pi \label{cons_h1} \\
    \alpha_{3}+\alpha_{4}+\alpha_{5}=2 \pi \label{cons_h2} 
\end{align}
From these 10 equations, we treat the equation (\ref{cons_g7}) as an equation that provides the objective function $f$. That is, we form $f=
\cosh(x)=\frac{\cos{\left( \frac{\pi}{3} \right)}}{\sin{\left( \frac{\theta}{2} \right)}}$ and we want to find the best value of radius $x$, i.e. $f$ is maximal. But, there are some conditions i.e equations $(\ref{cons_g1}),\cdots, (\ref{cons_g6}), (\ref{cons_h}), (\ref{cons_h1})$, and $(\ref{cons_h2})$ which should be satisfied. Therefore, we face to a constrained extremum problem. We shall describe the so-called Lagrange multipliers method to attack this problem in Sect.\ref{sectionLagrange}.
Now we shall motivate our approach.
We want to find the best value of radius $x$, it means that the maximum value of $x$ with the constraint above.
We shall consider the equation (\ref{cons_g7}) as the candidate of our objective function as follows
\begin{equation*}
    x=f(\theta)=\cosh^{-1}\left(\frac{1}{2 \sin(\frac{\theta}{2})}\right)
\end{equation*}
One could formally reduce the conditions above by substituting all of the constraints to $f$. Then we have $f=f(\alpha_1, \cdots, \alpha_6, \beta_1, \cdots, \beta_6, \theta)$, where the natural domain of $f$ (subset of $\mathbb{R}^{13}$) is determined by the remaining constrains.
 We first study the very specific case (regular case) where all vertices have the same interior angles $ 2 \pi / 3 $. This case provides our conjectured optimum.
\subsection{Very Specific Case (Regular Case)}
We consider a specific case, the so called regular case, by setting $\alpha_1=\alpha_2=\cdots=\alpha_6$. The constraints (\ref{cons_h1}), (\ref{cons_h2}) and (\ref{cons_h}) impose the vertex angles are equal, $\frac{2 \pi}{3}$. This choice also affects that the central angles are equal, also to $\theta$, and satisfying the following inequality (the triangle condition in the hyperbolic plane).
\begin{align*}
    \frac{\theta}{2}+\frac{1}{2}\frac{2 \pi}{3}+\frac{\pi}{2} < \pi,~\text{then}~
    \theta < \frac{\pi}{3}
\end{align*}
We just have one equation for solving the radius $x$, $\cos{\left(\frac{\pi}{3}\right)}=\cosh{(x)} \sin{\left(\frac{\theta}{2}\right)}$.
Therefore $x=\cosh^{-1}{\left(\frac{1}{2 \sin{(\frac{\theta}{2})}}\right)}$. The value $x$ is only depend on the central angle $\theta$.\\
Since the sum of all central angles should be $2 \pi$, it follows that $10 \theta = 2 \pi$, then $\theta = \frac{\pi}{5}$.
Now we can compute directly the exact value of $x$, in this specific case.
\begin{equation*}
    x \approx 1.061275061
\end{equation*}
The area A of a circle disc with radius $x$ in hyperbolic plane is given by
\begin{equation*}
    A=4 \pi \sinh^{2}\left( \frac{x}{2} \right),~\text{in our case}~A \approx 3.883222071.
\end{equation*}\\
\noindent Furthermore, the density $d$ is described by division of the area of the circle and that one of the fundamental polygon. We could compute directly that the area of polygon is $\frac{4}{3} \pi$, characteristic invariant for group $G=[3,3,3,3]$. In our calculation, we found that $d \approx 0.9270509814$.
Our conjecture is that this regular case would give the best circle, in term the largest, inscribed one into the fundamental polygon of group $G=[3,3,3,3]$. We would like to investigate this conjecture by studying some possible situations. We will have a conditional extremum problem. First, we use the tools in multivariate calculus, the so called Lagrange multiplier method.

\section{The Lagrange Multiplier Method}\label{sectionLagrange}
Based on the system of equations in the previous section, eq.$(\ref{cons_g1}), \cdots (\ref{cons_h2})$.  We formulate the following conditional extremum problem.
From these 10 equations we set the function $f$ from the constrain equation (\ref{cons_g7}) and some constraints $g_i, h, h_j$ from 9 remaining equations.
We would like to find the maximum of radius $x$.
From the equation (\ref{cons_g7}), we have $\cosh(x)=\frac{\cos{\left(\frac{ \pi}{3}\right)}}{\sin{\left(\frac{\theta}{2}\right)}}$. Since $\cosh$ is a monotonic increasing function for $x>0$, to maximize $x$, we just maximize $\cosh(x)$. We take $f(\alpha_1, \cdots \alpha_6,\beta_1, \cdots, \beta_6, \theta)=\frac{\cos{\left(\frac{ \pi}{3}\right)}}{\sin{\left(\frac{\theta}{2}\right)}}$ for the objective function.

We formulate the constraints by setting a subtraction of the expression in equations $(\ref{cons_g1}), \cdots, (\ref{cons_g6})$ from the expression in equation \ref{cons_g7}, i.e. $\frac{\cos{\left(\frac{ \pi}{3}\right)}}{\sin{\left(\frac{\theta}{2}\right)}}-\frac{\cos{\left(\frac{{\alpha}_i}{2}\right)}}{\sin{\left(\frac{{\beta}_i}{2}\right)}}=0$, for $i=1,..,6$.
Since $\alpha_i, \beta_i, \theta$ are the variables, half of them representing angles of rectangular triangles, we can restrict their value in $[0,\pi ]$.\\ Therefore we treat our problems in region $[0, \pi]^{13} \subset \mathbb{R}^{13}$. For convenience, we also write the tuple $( \alpha_1, \alpha_2, \alpha_3, \alpha_4, \alpha_5, \alpha_6, \beta_1, \beta_2, \beta_3, \beta_4, \beta_5, \beta_6, \theta)=\boldsymbol{X}$, as an element in $[0, \pi]^{13} \subset \mathbb{R}^{13}$.
The set of constrain is described by the following system.
\begin{align*}
    &g_i=\cos{\left(\frac{\pi}{3}\right)}\sin{\left(\frac{\beta_i}{2}\right)}-\cos{\left(\frac{\alpha_i}{2}\right)}\sin{\left(\frac{\theta}{2}\right)}=0,~~\text{where}~\cos{\frac{\pi}{3}}=\frac{1}{2},\\&~\text{and}~i=1,2,\cdots,6\\
    &h=\sum_{i=1}^{6} \beta_i + 4 \theta-2 \pi =0,~~
    h_1=\alpha_1+\alpha_2+\alpha_6-2 \pi=0,\\
    &h_2=\alpha_3+\alpha_4+\alpha_5-2 \pi=0
\end{align*}
The complete construction of our constrained extremum problem is described as follows
\begin{align}
    \text{Maximize}~f(\boldsymbol{X})=\frac{1}{2 \sin{\left( \frac{\theta}{2} \right)}},~ \text{subjected to the above constraints}.
\end{align}
\subsection{The Compactness of Constrained Region}
We consider the constrained region $S$. The compactness of $S$ could help us to guarantee the existence of maximum (and minimum) of $f$ in $S$. Consider $g_i(\boldsymbol{X})=\frac{1}{2} \sin{\left( \frac{\beta_i}{2}\right)}-\sin{\left( \frac{\theta}{2}\right)} \cos{\left( \frac{\alpha_i}{2}\right)}$ for $i=1,\cdots 6$ are bounded continuous functions. Therefore $g_i^{-1}(0)$, the inverse images of $0$, closed set, under continuous function are also closed in $\mathbb{R}^{13}$. One could see that $h^{-1}(0)$, $h_{1}^{-1}(0)$, $h_{2}^{-1}(0)$ are also closed in $\mathbb{R}^{13}$. With this compactness assumption, we note that $f$ is bounded in $S$, then $f$ has a maximum and minimum in $S$.
We have obtained that our conjectured point $X_0$ satisfies the necessary condition for the local maximum of the constrained extremum problem. We need to observe further whether this point is really a local maximum point. We apply the second derivative test, called the bordered determinant criterion test in \cite{Magnus2019, Trench2012}.
\section{Other Fundamental Domain Types: Finding Global Maximum}\label{Other_Possibility}
Based on our analysis on the Type-5 of the fundamental domains for $G=[3, 3, 3, 3]$, we obtain the largest radius $x\approx1.061275061$. We need to compare it with the largest radius reached on the other fundamental domains of types 1, 2, 3, 4. (Fig. \ref{TreeGraph}-\ref{Fundamentals}).
The analogous methods, Lagrange multiplier, and bordered determinant are applied to the cases of types 3 and 4, when they have independent parameters raised by the additional point. While the fundamental domains of types 1 and 2 have only fixed vertex angles. The appeared equation system could be solved immediately by some appropriate substitutions.
\begin{enumerate}
    \item \textbf{Type-1} \\
    The constructed fundamental domain on this type has no additional point. It just contains two rotational centers $R_1, R_4$ and two rotational centers $R_2, R_3$ that appear twice, see Fig.\ref{TreeGraph}, and \ref{Fundamentals}. The angles on the rotational vertices $R_1$ and $R_4$ are equals to $\frac{2\pi}{3}$. While the angles on the twice appeared vertices $R_2^1$, $R_2^2$, $R_3^1$, $R_3^2$ are  a half of its original, i.e $\frac{2\pi}{6}$.
    We derive the following system of equations 
    \begin{align*}
        \cos{\left( \frac{1}{2} \cdot \frac{2\pi}{3}\right)}=\cosh{x}\sin{\left( \frac{\theta_1}{2}\right)},&~~
        \cos{\left(\frac{1}{2} \cdot \frac{2\pi}{6}\right)}=\cosh{x}\sin{\left( \frac{\theta_2}{2}\right)}\\
        2 \theta_1 &+ 4 \theta_2 = 2 \pi
    \end{align*}
    Basically, this equation system has only fix parameters.  Using some appropriate substitutions, we conclude that the value of radius $x$ is given by 
    \begin{equation*}
        x=\cosh^{-1}{\left( \frac{3}{2} \right)} \approx 0.962423
    \end{equation*}
    \item \textbf{Type-2}\\
    In this type, we have rotational center $R_1$ that appears three times on the fundamental domain. We derive the single equation
    similar to type-1, we get the numerical value $x \approx 0.927539$.
    \item \textbf{Type-3}\\
    This type has a single additional point on the three graph Fig.\ref{TreeGraph}. The corresponding fundamental domain is given in Fig.\ref{Fundamentals}. On that fundamental domain the additional point $P$ appears four times, namely $P^1$, $P^2$, $P^3$, and $P^4$. We denote the angle near $P^1$, $P^2$, $P^3$, $P^4$ are $\alpha_1$, $\alpha_2$, $\alpha_3$, $\alpha_4$ and their corresponding central angles are $\beta_1$, $\beta_2$, $\beta_3$, $\beta_4$  respectively. The interesting is that the value of $x$ depends on $\alpha_1$, $\alpha_2$, $\alpha_3$, $\alpha_4$.  Inspired by exploration on the type-5 in section 2, we can formulate the set of constraints and find the maximum value of $\cosh{x}=\frac{1}{2 \sin{\left( \frac{\theta}{2}\right)}}$.
    Finally, this equations system could be solved for $x$, that is $x\approx 1.031718$.
    \item \textbf{Type-4}
    In this type, the vertex $R_3$ appears twice on the fundamental domain, see Fig.\ref{Fundamentals}. While the additional point $P$ is copied three times, namely $P^1$, $P^2$, $P^3$. We denote the angle near $P^1$, $P^2$, $P^3$ by $\alpha_1, \alpha_2, \alpha3$ and their corresponding central angles by $\beta_1, \beta_2, \beta_3$, respectively. We have the equation system of the constraints and the corresponding approximated value of $x$ is about $1.011595$.\\
    \end{enumerate}
The summary of all largest possible inscribed circle radii on each type of fundamental domains is presented in the following tables.
\begin{table}[h!]
\centering
    \begin{tabular}{||c c||}
    \hline
    Type & Largest Radius \\[0.5 ex]
    \hline \hline
    1 & 0.962423 \\
    \hline 
    2 & 0.927539 \\
    \hline 
    3 & 1.031718 \\
    \hline 
    4 & 1.011595 \\
    \hline
    5 & 1.061275 \\ 
    \hline
    \end{tabular}
    \caption{The largest inscribed circle radius comparison}
\label{comparison}
\end{table}
According to the largest radius comparison, Table \ref{comparison}, the largest radius of all types is attained on type-5, namely $x \approx 1.061275$. Based on the exploration of the constrained optimum problem, it could be conjectured that optimum conditions might happen whenever the corresponding parameters are equal. This intuition could be shown in the next section.
\section{Geometric Argument: Conclusion to [3,3,3,3]}\label{Conclussion_G4}

According to the approach of Lagrange multiplier method and bordered determinant criterion, it could be concluded that the maximum possible inscribed circle radius is attained whenever the corresponding independent vertex angles are equal. 
The following theorem will state this more intuitively and geometrically simpler: \textbf{Whenever we equalize the corresponding angles at $G-$equivalent vertices, the radius will increase}. This could be further developed to the proof of necessary conditions for the general problem for arbitrary cocompact plane group $G$ (as conjectured by the authors of \cite{luvcic2018}).
\begin{theorem}\label{Theorem2}
If we exchange in Theorem 1 two angles, say $\alpha_1$ and $\alpha_2$ both to $\frac{\alpha_1+\alpha_2}{2}$ in a given configuration with fixed radius $x$, so $\cosh{x}$, then for the corresponding central angles $\beta_1$ and $\beta_2$ their arithmetic mean $\frac{\beta_1 + \beta_2}{2}$ increases. So, changing $\alpha_1$, $\alpha_2$ to $\frac{1}{2}(\alpha_1 + \alpha_2)$ only, the inscribed circle will have bigger radius in the procedure.
\end{theorem}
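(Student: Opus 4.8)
The plan is to recognize this as a strict-concavity (Jensen) statement for the map that sends a vertex angle to its central angle at fixed radius, combined with the monotonicity of the total central angle in the radius. First I would fix $c := \cosh x > 1$ and read off from the tangency relation \eqref{key1} that each central angle is an explicit function of its vertex angle,
\[
\beta_i = \phi(\alpha_i), \qquad \phi(\alpha) := 2\arcsin\!\left(\frac{\cos(\alpha/2)}{c}\right).
\]
Since $\cos(\alpha/2) < 1 < c$ for $\alpha \in (0,\pi)$, the argument of $\arcsin$ lies in $(0,1)$ and $\phi$ is smooth and real there. Replacing both $\alpha_1$ and $\alpha_2$ by $\bar\alpha := \tfrac12(\alpha_1+\alpha_2)$ sends the two central angles to $\phi(\bar\alpha)$ and $\phi(\bar\alpha)$, so the assertion that $\tfrac12(\beta_1+\beta_2)$ increases is exactly
\[
\phi\!\left(\frac{\alpha_1+\alpha_2}{2}\right) > \frac{\phi(\alpha_1)+\phi(\alpha_2)}{2},
\]
i.e. strict concavity of $\phi$ on $(0,\pi)$ for $\alpha_1 \neq \alpha_2$.

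The key step is therefore to establish $\phi'' < 0$. Writing $u = \alpha/2$ and differentiating once gives $\phi'(\alpha) = -\sin u/\sqrt{c^2 - \cos^2 u}$; differentiating again and using the identity $\sin^2 u - (c^2 - \cos^2 u) = 1 - c^2$ collapses the numerator to
\[
\phi''(\alpha) = \frac{(1-c^2)\,\cos u}{2\,(c^2 - \cos^2 u)^{3/2}}.
\]
Because $c = \cosh x > 1$ forces $1 - c^2 < 0$, while $\cos u > 0$ and the denominator is positive on $u \in (0,\tfrac{\pi}{2})$, we obtain $\phi'' < 0$ throughout. This is the main point where care is needed—not in depth but in bookkeeping: I must verify the sign collapse in the numerator and confirm that the whole relevant range $u = \alpha_i/2$ stays in $(0,\tfrac{\pi}{2})$, which is guaranteed since \eqref{key1} forces $\cos(\alpha_i/2) > 0$. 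With $\phi$ strictly concave, Jensen's inequality delivers the displayed strict inequality, so the arithmetic mean of the central angles genuinely increases.

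Finally I would deduce the radius conclusion. Averaging keeps $\alpha_1+\alpha_2$ unchanged, hence leaves the vertex-angle constraints \eqref{cons_h1}--\eqref{cons_h2} intact; in particular the remaining angle $\alpha_6$ (or $\alpha_5$) at the same vertex need not move. By the strict concavity just proved, the total central angle $\sum_i \beta_i + 4\theta$ strictly exceeds $2\pi$ when evaluated at the original radius $x$. But each $\beta_i = 2\arcsin\!\big(\cos(\alpha_i/2)/\cosh x\big)$ is strictly decreasing in $x$, so the total central angle is strictly decreasing in $x$; therefore the unique radius that restores the closing condition \eqref{cons_h} must be strictly larger than $x$. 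This yields the strictly bigger inscribed circle claimed in the theorem.
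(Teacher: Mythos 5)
Your proposal is correct and takes essentially the same route as the paper: its Lemma 1 establishes exactly your strict concavity claim for the map $\alpha \mapsto \beta(\alpha)$ at fixed $\cosh x$ via a second-derivative computation, applies the resulting Jensen-type inequality, and then argues—just as you do—that restoring the closing condition $\sum_i \beta_i + 4\theta = 2\pi$ forces $\cosh x$, hence $x$, to increase because each central angle is strictly decreasing in $x$. Your closed-form expression $\phi''(\alpha) = \frac{(1-c^2)\cos(\alpha/2)}{2\,(c^2-\cos^2(\alpha/2))^{3/2}}$ is simply a cleaner bookkeeping (in full angles rather than half-angles) of the same calculation.
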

\noindent Before proving this theorem, we first discuss a Jensen-type inequality of Lucic- Molnar by \cite{luvcic1991} for $\mathbb{H}^2$ which is described in the following lemma.
\begin{lemma} 
 The function $\beta : (0, \frac{\pi}{2}) \ni \alpha \mapsto \beta(\alpha) \in (0, \frac{\pi}{2})$, as above, given by   $\sin{(\beta(\alpha))} = \frac{\cos{\alpha}}{\cosh{x}}$ , with fixed $x$ and $\cosh{x}$ , is concave (from below).
\end{lemma}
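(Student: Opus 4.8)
The plan is to verify concavity directly by computing $\beta''(\alpha)$ and showing it is negative throughout the interval. First I would freeze the constant $c := \frac{1}{\cosh x}$, which satisfies $0 < c < 1$ since $x>0$ forces $\cosh x > 1$, and rewrite the defining relation as $\sin\beta = c\cos\alpha$. For $\alpha\in(0,\tfrac{\pi}{2})$ the right-hand side lies in $(0,c)\subset(0,1)$, so $\beta(\alpha)=\arcsin(c\cos\alpha)$ is a well-defined smooth function into $(0,\tfrac{\pi}{2})$ with $\cos\beta>0$ everywhere. This positivity of $\cos\beta$ is exactly what will let me read off the sign of $\beta''$ from the sign of a single numerator.

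Next I would differentiate the relation implicitly twice. One differentiation gives $\cos\beta\cdot\beta' = -c\sin\alpha$, hence $\beta' = -\frac{c\sin\alpha}{\cos\beta}$. Differentiating once more by the quotient rule and substituting this expression for $\beta'$ back in yields
\begin{equation*}
\beta'' = \frac{-c\cos\alpha\,\cos^2\beta + c^2\sin^2\alpha\,\sin\beta}{\cos^3\beta}.
\end{equation*}
Because $\cos^3\beta>0$ on $(0,\tfrac{\pi}{2})$, the sign of $\beta''$ coincides with the sign of the numerator $N := -c\cos\alpha\cos^2\beta + c^2\sin^2\alpha\sin\beta$.

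The one genuine step is to eliminate $\beta$ from $N$ using the defining relation. Substituting $\sin\beta = c\cos\alpha$ together with $\cos^2\beta = 1 - c^2\cos^2\alpha$, and then collecting the two cubic-in-$c$ terms by the Pythagorean identity $\cos^2\alpha + \sin^2\alpha = 1$, collapses everything to
\begin{equation*}
N = c\cos\alpha\,(c^2 - 1).
\end{equation*}
Since $c\in(0,1)$ and $\cos\alpha>0$ for $\alpha\in(0,\tfrac{\pi}{2})$, this is strictly negative; hence $\beta''<0$ and $\beta$ is (strictly) concave from below, as claimed.

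I expect no real obstacle here: the statement reduces to a one-variable second-derivative computation, and the only care required is bookkeeping in the quotient rule and remembering to invoke $\sin^2\alpha+\cos^2\alpha=1$ at the end so that the two cubic terms cancel against one another rather than being left in an opaque form. Should one prefer to bypass the implicit differentiation entirely, the same conclusion follows by differentiating $\beta(\alpha)=\arcsin(c\cos\alpha)$ outright, but the implicit route keeps the algebra shortest.
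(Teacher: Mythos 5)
Your proof is correct and takes essentially the same route as the paper: differentiate $\sin\beta = c\cos\alpha$ implicitly twice and read off the sign of $\beta''$ from the numerator, using $\cos\beta>0$ on $\left(0,\tfrac{\pi}{2}\right)$. Your closed form $N=c\cos\alpha\,(c^{2}-1)<0$, obtained by substituting the defining relation and the Pythagorean identity, is in fact a cleaner finish than the paper's manipulation (whose quotient-rule step carries a sign slip that happens not to affect the conclusion), and both arguments land on the same strict inequality $\beta''<0$.
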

\noindent \textbf{\textit{Proof}} [By comunication with Prof. Emil Moln\'{a}r]\\
As we look at formulas in Theorem \ref{ExistenceInball}, $\displaystyle \cos{\left(\frac{\alpha}{2}\right)}=\cosh{x} \sin{\left(\frac{\beta}{2} \right)}$ is the crucial relation (with fixed $\cosh{x}>1$) to define central angles $\beta_i(\alpha_i)$ of $\alpha_i$, $(i=1,\cdots,m \geq 3)$ as function $\beta(\alpha)$ of $\alpha$.
Let us start with
\begin{equation}
    \sin{(\beta(\alpha))}=\frac{\cos{\alpha}}{\cosh{x}},~~~ (0<\alpha<\frac{\pi}{2}) \label{awal}
\end{equation}
By differentiating both sides by $\alpha$, we obtain
\begin{align*}
   \frac{d}{d\alpha} (\sin{(\beta(\alpha))})&= \frac{d}{d\alpha}\left( \frac{\cos{\alpha}}{\cosh{x}} \right),~\text{it leads to}~
   \cos(\beta(\alpha)) \frac{d\beta(\alpha)}{d\alpha}&=-\frac{\sin{\alpha}}{\cosh{x}}
\end{align*}
\begin{align}
    \frac{d\beta(\alpha)}{d\alpha}=\frac{1}{\cosh{x}}\left( -\frac{\sin{\alpha}}{\cos{(\beta{(\alpha)})}}\right).\label{beta'}
\end{align}
We differentiate again $\frac{d\beta(\alpha)}{d\alpha}$ by $\alpha$
\begin{align*}
    \frac{d^2}{d\alpha^2}(\beta(\alpha))&=\frac{d}{d\alpha}\left(\frac{d\beta(\alpha)}{d\alpha}\right)
    =\frac{1}{\cosh{x}}\left( \frac{\sin{(\beta{(\alpha)})}(\sin{\alpha})\frac{d\beta(\alpha)}{d\alpha}-\cos{(\beta{(\alpha)})}\cos{\alpha}}{\cos^{2}(\beta(\alpha))} \right)
\end{align*}
Using  the facts $\displaystyle{\frac{1}{\cosh^{2}x}=1-\tanh^2{x}}$ and $\displaystyle{\frac{1}{\cos^{2}\beta{(\alpha)}}=1+\tan^2{\beta{(\alpha)}}}$ and also substituting Eq.\ref{awal}, \ref{beta'}, we obtain
\begin{align*}
    \frac{d^2}{d\alpha^2}(\beta(\alpha))&=-\frac{1}{\cosh^{2}x}\left( \frac{\tan{(\beta{(\alpha)})} \sin^2{\alpha}+\cos{(\beta{(\alpha)})}\cos{\alpha} \cosh{x}}{\cos^2{\beta{(\alpha)}}}\right)\\
    &=-\frac{1}{\cosh^{2}x} \left(\frac{\tan^2{(\beta{(\alpha)})}+1}{\tan{(\beta{(\alpha)})}} \right)\left( \tan^2{(\beta{(\alpha)})} \sin^2{\alpha}+\cos^2{\alpha}\right) < 0~~ \square
\end{align*}
Thus $\beta{(\alpha)}$ is concave (from below) function.\\~\\
\noindent \textbf{\textit{Proof of Theorem \ref{Theorem2}}}.\\
By Lemma 1, and because sinus is a monotone increasing function in $(0,\frac{\pi}{2})$
\begin{equation*}
    \sin{\left( \beta{\left( \frac{\alpha_1 + \alpha_2}{2} \right)} \right)}> \frac{\sin{(\beta{(\alpha_1)})} + \sin{(\beta{(\alpha_2)})}}{2} = \left( \frac{\sin{\beta_1}+\sin{\beta_2}}{2} \right)
\end{equation*}
holds as a Jensen-type inequality (The graph of the function is over the segment $(\alpha_1; \beta_1)$ $(\alpha_2; \beta_2)$  in midpoint $\left( \frac{\alpha_1 + \alpha_2}{2} \right)$, then this stands every point of the segment). Then the sum would be  $\sum_{i}\beta_i>2\pi$. \textbf{To equalize it again}, by the procedure in Theorem \ref{ExistenceInball} with previous angles and two times $\frac{1}{2}(\alpha_1+\alpha_2)$ instead of $\alpha_1$ and $\alpha_2$, $\cosh{x}$ and $x$ have to be chosen bigger. In our local optimal cases where every possible equality has been reached, such an increasing of $x$ $(\cosh{x})$ by choosing $\left( \frac{\alpha_1 + \alpha_2}{2} \right)$ is not possible, so $x$ cannot increase in such a way. Comparison of these local optima serves the optimum since the existence has already been guaranteed by compactness of the domain of variables. $\square$
\section{Generalization to $G=[3,3,3 \cdots, 3]$ \\ of $l \geq 4$ rotational centers each of order 3}\label{Generalization}
Finally, we shall see that group $G=[3,3,3, \dots,3]$, with $l$-times rotational center of order 3, $l \geq 4$. The largest inscribed circle radius could be attained by equalizing the angles corresponding to the additional vertices as many as possible. We follow the following propositions in \cite{luvcic1990combinatorial, molnar2019} to study this general construction.
\begin{proposition}\label{Proposition_Emil}
 For the number $w$ of additional points of an orbifold tree holds
 \begin{equation*}
     w \leq 2 \alpha g + l -q-2
 \end{equation*}
 If $n$ is the number of edges (and vertices) of a fundamental domain of a plane group $G$, then (with some exceptions if the domain is unique) holds $\displaystyle{n_{\mathrm{min}} \leq n \leq n_{\mathrm{max}}}$, where
 \begin{equation}
     n_{\mathrm{min}}=2 \alpha g ~ \text{if} ~l=q=0,~~ \text{or}~~ n_{\mathrm{min}}=q_0 + \left( \sum_{k=1}^{q} l_k \right)+2\alpha g + 2 l + 2 q -2~ \text{otherwise}
 \end{equation}
 and\\
 \begin{equation}
     n_{\mathrm{max}}=\left( \sum_{k=1}^{q} l_k \right) + 6 \alpha g + 4l+5q-6,
 \end{equation}
 where $\alpha=2$ if the orbifold is orientable and $\alpha=1$ otherwise, and $q_0$ is the number of boundary components containing no dihedral corner. Moreover, for a given $G$ there exist fundamental domains with $n_{\mathrm{min}}$ and $n_{\mathrm{max}}$ edges.
\end{proposition}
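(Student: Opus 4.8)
The plan is to argue purely combinatorially, reading off all the bounds from the Euler--Poincaré formula applied to the cell decomposition of the orbifold $\mathcal{O}=\mathbb{H}^2/G$ that a fundamental polygon induces. First I would fix the picture already seen for $[3,3,3,3]$: a fundamental domain with $n$ sides corresponds to a connected cutting graph $K$ embedded in the underlying surface of $\mathcal{O}$ whose complement is a single open disk, the polygon interior. The vertices of $K$ are exactly the $l$ rotational centers, the vertices coming from the $q$ boundary period cycles together with their $\sum_k l_k$ dihedral corners, and the $w$ additional points with trivial stabilizer. Traversing the boundary of the single $2$-cell once shows that a vertex of valence $d$ produces $d$ corners of the polygon and that each edge of $K$ is crossed twice; hence $n=\sum_v \operatorname{val}(v)=2E$, where $E$ is the number of edges of $K$ (the handshake identity). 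This is the identity that ties the purely topological data to the metric polygon whose existence is supplied by Theorem \ref{ExistenceInball}.

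Second, I would compute $E$ from the topology. Since the decomposition has exactly one face, $\chi_{\mathrm{top}}(\mathcal{O})=V-E+1$ with $V=l+w$ in the closed case (and $V=l+w+q+\sum_k l_k$ once boundary vertices are included). For the closed case the first Betti number of $K$ equals $\alpha g$, namely $2g$ independent loops when $\mathcal{O}$ is orientable and $g$ when it is not, so $E=V+\alpha g-1$. This is precisely where orientability enters through the single symbol $\alpha$ and where the genus contributes the combination $2\alpha g$ that appears in the statement. Solving for $E$ and substituting into $n=2E$ then writes $n$ as an affine, strictly increasing function of $w$ over the fixed data $g,l,q,q_0,l_k$.

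Third, each additional point must have valence at least $3$: a trivial-stabilizer leaf could be pruned and a valence-$2$ additional point could be absorbed into its two incident edges, each operation still yielding a legitimate fundamental domain. Feeding $\operatorname{val}(v)\ge 3$ for the $w$ additional vertices and $\operatorname{val}(v)\ge 1$ for the special vertices into $2E=\sum_v \operatorname{val}(v)$ and eliminating $E$ by the Euler formula yields exactly $w\le 2\alpha g+l-q-2$. The two edge bounds follow by the monotonicity of $n=2E$ in $w$: the maximum $n_{\mathrm{max}}$ is attained when $w$ is maximal and every additional point has valence exactly $3$, giving $n_{\mathrm{max}}=\left(\sum_k l_k\right)+6\alpha g+4l+5q-6$, while the minimum $n_{\mathrm{min}}$ is attained when $w$ is as small as the connectivity of $K$ allows (for instance a path through the centers when $g=q=0$), giving the displayed $n_{\mathrm{min}}$ after accounting for $q_0$ and the $l_k$. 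Existence of realizing domains is then settled by exhibiting these two extremal graphs explicitly and invoking Theorem \ref{ExistenceInball} to metrize the resulting angle data.

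The main obstacle will be the case analysis hidden in the boundary and orientability bookkeeping. Reflection (dihedral) boundary components contribute mirror edges that are not paired like the orientation-preserving sides, so the clean identity $n=2E$ must be refined; this refinement is exactly what splits the roles of $q$ (all period cycles), $q_0$ (those carrying no corner), and $\sum_k l_k$ differently in $n_{\mathrm{min}}$ and in $n_{\mathrm{max}}$, and getting the signs and the constant $-2$ versus $-6$ correct is the delicate part. A secondary subtlety is the parenthetical ``exceptions if the domain is unique'': for a handful of small signatures the cutting graph $K$ is forced, so the interval $[n_{\mathrm{min}},n_{\mathrm{max}}]$ collapses to a single value and the two extremal constructions coincide. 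These degenerate signatures must be enumerated and verified by hand rather than through the generic counting argument above.
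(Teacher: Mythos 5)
First, be aware that the paper does not prove this proposition at all: it is quoted from the cited sources \cite{luvcic1990combinatorial} and \cite{molnar2019}, and the paper merely specializes it (orientable, $\alpha=2$, $g=0$, $q=q_0=0$) to obtain Lemma \ref{lemma_addpoint} for $G=[3,3,\dots,3]$. So there is no in-paper proof to compare yours against; what can be assessed is whether your reconstruction proves the statement as given.

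For the closed case ($q=0$) your outline is sound and reproduces the stated constants: with one $2$-cell the Euler formula forces $E=V+\alpha g-1$ where $V=l+w$, the handshake identity $2E=\sum_v\operatorname{val}(v)\ge l+3w$ (centers have valence $\ge 1$, additional points valence $\ge 3$) yields $w\le 2\alpha g+l-2$, and $n=2E=2(l+w+\alpha g-1)$ is increasing in $w$, giving $n_{\mathrm{min}}=2\alpha g+2l-2$ and $n_{\mathrm{max}}=6\alpha g+4l-6$; this is exactly the special case the paper actually uses. The genuine gap is the part you explicitly postpone: every term of the proposition that you have not derived (the $-q$ in the bound on $w$, the $q_0$ and $\sum_k l_k$ terms, and the coefficient $2q$ in $n_{\mathrm{min}}$ versus $5q$ in $n_{\mathrm{max}}$) lives in the boundary case, where your key identity $n=2E$ is simply false: mirror edges lying on the orbifold boundary are traversed once, not twice, by the polygon boundary, so one needs $n=2E_{\mathrm{int}}+E_{\mathrm{mir}}$ together with a count of mirror edges on each boundary component (at least $l_k$ mirror arcs when there are $l_k$ dihedral corners, and at least one side when $l_k=0$, which is the origin of the $q_0$ term) and valence conventions at the corner vertices. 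Deferring this bookkeeping is not a refinement left to the reader; it is most of the proposition. Similarly, "enumerate the exceptional signatures by hand" is not yet an argument: the exceptions are not a formality, since with boundary the two formulas can even invert (a triangle reflection group, $g=l=0$, $q=1$, $l_1=3$, gives $n_{\mathrm{min}}=3$ but $n_{\mathrm{max}}=2$, and indeed its domain is unique), so determining exactly where the generic constructions break down is part of the content. In short: your method is the right one and suffices for the case the paper needs, but as a proof of the full statement it is incomplete.
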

We study that in our cases $G=[3,3,3, \cdots, 3]$ above the $l$ rotational center are embedded into a topological sphere, i.e $g=0$. Since it is an orientable surface, $\alpha=2$. Moreover, it has no boundary component, $q=q_0=0$. Applying these conditions to the proposition, we have Lemma \ref{Proposition_Emil} as follows
\begin{lemma}\label{lemma_addpoint}
In $G=[3,3,3, \cdots, 3]$ of $l$-rotational centers of order 3, $l \geq 4 $,  there are possible number of additional points $w$ that are bounded as follows
\begin{equation}\label{additional}
   0 \leq w \leq l-2,
\end{equation}
Furthermore the possible number $n$ of sides (and vertices) of the fundamental polygon is given by
\begin{equation}
    2l-2 \leq n \leq 4l-6
\end{equation}
\end{lemma}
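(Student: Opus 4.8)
The plan is to obtain Lemma \ref{lemma_addpoint} as a direct specialization of Proposition \ref{Proposition_Emil}. I would first pin down the values of the topological invariants attached to the orbifold $\mathbb{H}^2/G$ for $G=[3,3,\ldots,3]$, and then substitute them term by term into each of the three bounds supplied by the proposition. No new geometric input is needed beyond reading off the signature correctly.

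For the invariants I would argue as follows. The $l$ rotation centers of order $3$ lie on a topological sphere, so the underlying surface has genus $g=0$. The surface is orientable, so in the notation of Proposition \ref{Proposition_Emil} we take $\alpha=2$. Finally the orbifold is closed, carrying no reflection boundary: this gives $q=q_0=0$ and an empty family $\{l_k\}$, so that every occurrence of $\sum_{k=1}^{q} l_k$ vanishes. These three facts ($g=0$, $\alpha=2$, $q=q_0=0$) are exactly the hypotheses announced just before the lemma.

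With the invariants fixed, each inequality drops out by arithmetic. Feeding $\alpha=2$, $g=0$, $q=0$ into $w\le 2\alpha g+l-q-2$ gives $w\le l-2$, and together with the trivial lower bound $w\ge 0$ (as $w$ is a count of points) this is precisely inequality (\ref{additional}). For the edge count I would observe that $l\ge 4$ forces $l\ne 0$, so the applicable branch of the piecewise formula is the ``otherwise'' case $n_{\mathrm{min}}=q_0+\sum_k l_k+2\alpha g+2l+2q-2$, which collapses to $n_{\mathrm{min}}=2l-2$. Likewise $n_{\mathrm{max}}=\sum_k l_k+6\alpha g+4l+5q-6$ collapses to $n_{\mathrm{max}}=4l-6$, yielding $2l-2\le n\le 4l-6$.

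The only genuine content here is bookkeeping rather than difficulty: the delicate points are correctly identifying the invariants from the signature $[3,3,\ldots,3]$ and selecting the right branch of the piecewise $n_{\mathrm{min}}$ formula, since a wrong branch (the $l=q=0$ case) would give a spurious answer. Once those are settled the three inequalities are immediate. As a sanity check I would test the endpoints against the worked $l=4$ domains of the previous sections: the minimal case $w=0$ should give $n=2l-2=6$, matching the type-$1$ hexagon, while the maximal case $w=l-2=2$ should give $n=4l-6=10$, matching the type-$5$ decagon, confirming that both bounds are attained exactly as asserted in the final clause of Proposition \ref{Proposition_Emil}.
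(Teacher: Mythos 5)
Your proposal is correct and follows essentially the same route as the paper: the lemma is obtained by specializing Proposition \ref{Proposition_Emil} with $g=0$, $\alpha=2$, $q=q_0=0$ (hence vanishing $\sum_k l_k$), choosing the ``otherwise'' branch of $n_{\mathrm{min}}$ since $l\geq 4$, which yields $0\le w\le l-2$ and $2l-2\le n\le 4l-6$ exactly as in the text. Your endpoint check against the $l=4$ hexagon (type-1) and decagon (type-5) is a nice addition that the paper only makes implicitly via its table of Diophantine solutions.
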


Finally, we give the last theorem of this paper, namely the maximum radius of inscribed circle into the fundamental domain of $G=[3,3,3,\cdots,3]$.
\begin{theorem}
Let $G=[3,3,3,\cdots,3]$ be a group with $l$-rotational centers of order 3, $l \geq 4$. The largest inscribed circle radius in its fundamental domain is realized when $l-2$ additional points are given, and their corresponding vertex angles are equalized. Furthermore, the inscribed circle radius $x$ is given by formula
\begin{equation}
    x=\cosh^{-1}{\left(\frac{1}{2 \sin{(\frac{\pi}{4l-6}})}\right)},~~~\text{for all}~~l=4, 5, 6, \cdots
\end{equation}
\end{theorem}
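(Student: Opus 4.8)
The plan is to reduce the whole cross-type comparison to a single concavity estimate followed by a one-variable monotonicity check. Write $n=2(l+w-1)$ for the number of sides of a fundamental domain carrying $w$ additional points; by Lemma~\ref{lemma_addpoint} one has $0\le w\le l-2$ and $2l-2\le n\le 4l-6$, and by Proposition~\ref{Proposition_Emil} a domain attaining $n=4l-6$ (equivalently $w=l-2$) genuinely exists. The first step is a bookkeeping observation: adding the cone angle of each of the $l$ order-$3$ centres ($2\pi/3$ apiece) and of each of the $w$ additional regular points ($2\pi$ apiece), the total of all interior angles of any fundamental polygon of this combinatorial type equals $\sum_i\alpha_i=\tfrac{2\pi}{3}l+2\pi w$. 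Together with $n=2(l+w-1)$ this shows that both the side count and the average angle
\begin{equation*}
    \bar\alpha:=\frac{1}{n}\sum_i\alpha_i=\frac{\pi(l+3w)}{3(l+w-1)}
\end{equation*}
depend only on $w$, not on the particular tree shape.

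Next I would turn the incircle condition $\sum_{i=1}^n\beta(\alpha_i)=2\pi$, with $\beta(\alpha)=2\arcsin\!\bigl(\cos(\alpha/2)/\cosh x\bigr)$, into a per-type bound on the radius. Since $\beta(\alpha)$ is (strictly) concave by the concavity lemma (Lemma 1), Jensen's inequality gives $2\pi=\sum_i\beta(\alpha_i)\le n\,\beta(\bar\alpha)$, hence $\beta(\bar\alpha)\ge 2\pi/n$ and therefore
\begin{equation*}
    \cosh x\ \le\ \frac{\cos(\bar\alpha/2)}{\sin(\pi/n)}=:g(w),
\end{equation*}
with equality exactly when all the $\alpha_i$ coincide. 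This is the role I expect of one of the key lemmas (the Jensen-type step, in the spirit of Theorem~\ref{Theorem2}): every configuration with $w$ additional points has radius at most $g(w)$, and in the extremal type $w=l-2$ one computes $\bar\alpha=2\pi/3$, so on a trivalent tree with $l$ leaves all angles can genuinely be set to $2\pi/3$, the bound is attained, and $\cosh x=\tfrac{1/2}{\sin(\pi/(4l-6))}$.

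It then remains to prove that $g$ is maximised at $w=l-2$. Putting $s=l+w-1$ and $t=\pi/(2s)=\pi/n$, a short simplification gives $\bar\alpha/2=\tfrac{\pi}{2}-\tfrac{2l-3}{3}t$, so $\cos(\bar\alpha/2)=\sin(\kappa t)$ with $\kappa=\tfrac{2l-3}{3}>1$, whence
\begin{equation*}
    g=\frac{\sin(\kappa t)}{\sin t}.
\end{equation*}
I would then show this is strictly decreasing in $t$ on the relevant range by differentiating: the numerator of $(\sin\kappa t/\sin t)'$ vanishes at $t=0$ and has derivative $(1-\kappa^2)\sin(\kappa t)\sin t<0$, so $g$ increases as $s$ grows and is therefore largest at $s=2l-3$, i.e.\ $w=l-2$. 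Substituting $t=\pi/(4l-6)$ gives $\kappa t=\pi/6$ and $g(l-2)=\tfrac{1/2}{\sin(\pi/(4l-6))}$, which via $\cosh x=g$ yields the stated formula. This monotonicity is the analytic heart of the argument and is where I expect the second key lemma to sit.

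The main obstacle I anticipate is not the calculus but the combinatorial reduction of the first step: one must verify that the total interior-angle sum and the side count are genuine invariants of $w$ for \emph{every} admissible tree (so that Jensen may be applied uniformly over a type), and that the all-angles-$2\pi/3$ configuration on the trivalent tree with $l$ leaves is actually realizable as a fundamental polygon admitting an inscribed circle. The former follows from the orbifold accounting behind Proposition~\ref{Proposition_Emil} and Lemma~\ref{lemma_addpoint}, the latter from the existence statement of Theorem~\ref{ExistenceInball}. Once these are secured, the concavity bound and the monotonicity of $\sin(\kappa t)/\sin t$ close the argument, the comparison of the finitely many per-type optima being justified by the compactness already exploited in Section~\ref{sectionLagrange}.
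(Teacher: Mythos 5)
Your proposal is correct, but it takes a genuinely different route from the paper's own proof. The paper works per tree type: it first assumes the angles at each rotational centre and each additional point are equally divided (the equalization being motivated by the Lagrange/bordered-Hessian analysis and Theorem~\ref{Theorem2} for $l=4$), encodes each type as a Diophantine solution $\{\mathrm{A}_i,\mathrm{B}_j\}$, reduces it to a single strictly increasing function $h(\beta)$ whose unique root determines the radius (Lemma~\ref{Essential_Lemma}), and then argues \emph{indirectly}: if a type with $w<l-2$ gave a larger radius, then $h^*(\beta_{l-2})>0$, and the Appendix inequalities collapse this to $2\sin\left(\frac{\pi}{4l-6}\right)>\sin\left(\frac{4\pi}{4l-6}\right)$, a contradiction for $l\ge 5$, the case $l=4$ being quoted from Section 3. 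You instead apply Jensen's inequality directly to the concave function $\beta(\alpha)$ of Lemma 1, exploiting the observation that both the side number $n=2(l+w-1)$ and the total angle sum $\frac{2\pi}{3}l+2\pi w$ depend only on $w$: this yields the uniform bound $\cosh x\le \cos(\bar\alpha/2)/\sin(\pi/n)=g(w)$ for \emph{every} tree and \emph{every} admissible angle distribution with $w$ additional points, with equality exactly at equal angles, and the elementary monotonicity of $\sin(\kappa t)/\sin t$ with $\kappa=\frac{2l-3}{3}>1$ shows $g$ is largest at $w=l-2$, where it equals $\frac{1}{2\sin(\pi/(4l-6))}$ and is attained by the regular trivalent configuration. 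Your version buys uniformity (all $l\ge4$ at once, no separate $l=4$ case and no indirect argument), and it absorbs the angle-equalization step (the content of Theorem~\ref{Theorem2} and the paper's per-type assumptions $\alpha_i=\frac{2\pi}{3i}$, $\alpha_j=\frac{2\pi}{j}$) into the Jensen equality condition; what it gives up is the per-type information the paper's $h$-equation provides (the actual radius of each tree type, cf.\ Table 1). Two points you rightly flag should stay explicit in a write-up: Lemma 1's concavity must be applied with all polygon angles in $(0,\pi)$, which is guaranteed by convexity of a polygon circumscribing its incircle (the same implicit framework, $\sum_i\beta_i=2\pi$, that the paper uses), and attainability at $w=l-2$ needs the all-$\frac{2\pi}{3}$ polygon on the trivalent tree with $l$ leaves to be realizable as a fundamental domain circumscribing a circle, which Theorem~\ref{ExistenceInball} together with the paper's unfolding construction supplies.
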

We need some preparations to prove this theorem. We divided our discussion into 3 following subsections also with additional information.  
\subsection{On combinatorial structure to the tree graph \\ of $G=[3,3,3,\cdots,3]$}
Firstly, the tree graph on the topological sphere for corresponding fundamental domain can be obtained completely through the algorithm in \cite{luvcic2018}, indicated previously. Particularly, in this case, $G=[3,3,3,\cdots,3]$, the tree graphs can be represented by the set of solutions for a "linear Diophantine equation system". \\
Let $\mathrm{A}_i$ be the number of rotational centers that have degree $i$ in the tree surface graph, i.e they have $i$ edges connected. Hence, the total number of all $A_i$ should be $l$, $\sum_{i=1}^{l-1} \mathrm{A}_i = l$. Note that the maximum possible degree of a rotational center is $l-1$.\\
Again, let $\mathrm{B}_j$ be the number of additional points whose degree is $j$ in the tree graph. The minimum degree of an additional point is $3$. While the maximum possible degree is $l$, e.g it happens in a star graph. Therefore, by adding all $B_j$, we get $w$, the total number of additional points, i.e $\sum_{j=3}^{l} \mathrm{B}_j = w$.
Furthermore, in our tree surface graph, the vertices can be either rotational centers or additional points. Note that the sum of all degrees of vertices in a graph is equal to 2 times the number of its edges. Since in a tree graph with $v$ vertices the number of edges is $v-1$, we can state the following equation
\begin{equation*}
    \sum_{i=1}^{l-1} i \cdot \mathrm{A}_i + \sum_{j=3}^{l} j \cdot \mathrm{B}_j = 2 (l+w-1)=n,
\end{equation*}
where $n$ is the number of vertices (sides of the fundamental polygon to the tree graph of vertices $v=l+w$ edges $v-1=l+w-1$).\\
Therefore, all of possible tree graphs for $G$ have to satisfy the solutions $\{ A_i, B_j \}$, $i=1 \cdots l-1$, $j=3 \cdots l$ of the following "linear Diophantine equation system"
\begin{align}\label{Diophantine1}
    \sum_{i=1}^{l-1} i \cdot \mathrm{A}_i + \sum_{j=3}^{l} j \cdot \mathrm{B}_j &= 2 (l+w-1)=n\\
    \sum_{i=1}^{l-1} \mathrm{A}_i &= l \\
    \sum_{j=3}^{l} \mathrm{B}_j &= w \\ \label{Diophantine4}
    \mathrm{A}_i, \mathrm{B}_j, &\in \mathbb{N} \cup \{0\},~ \text{where}(0 \leq w \leq l-2)
\end{align}
Example as before:
Let $G=[3,3,3,3]$, i.e $l=4$. The possible additional points are $w=0,1,2$. The corresponding linear Diophantine equations system is given by
\begin{align*}
    \mathrm{A}_1 + 2 \mathrm{A}_2 + 3 \mathrm{A}_3 + 3 \mathrm{B}_3 + 4 \mathrm{B}_4 = 2 ( 4 + w-1)\\
    \mathrm{A}_1 + \mathrm{A}_2 + \mathrm{A}_3 = 4,~
    \mathrm{B}_3 + \mathrm{B}_4 = w,~
    \text{where}~w=0,1,2
\end{align*}
The complete 5 solutions of the system above and their corresponding tree surface graphs, see Fig.\ref{TreeGraph},  are presented in the following table
\begin{table}[h!]
\centering
    \begin{tabular}{||c c c c c c c||}
    \hline
    Additional points & $\mathrm{A}_1$ & $\mathrm{A}_2$ & $\mathrm{A}_3$ & $\mathrm{B}_3$ &  $\mathrm{B}_4$ & Tree surface graph \\[0.5 ex]
    \hline \hline
    0 & 2 & 2 & 0 & 0 & 0 & Type-1 \\
    \hline 
    0 & 3 & 0 & 1 & 0 & 0 & Type-2 \\
    \hline 
    1 & 4 & 0 & 0 & 0 & 1 & Type-3 \\
    \hline 
    1 & 3 & 1 & 0 & 1 & 0 & Type-4 \\
    \hline
    2 & 4 & 0 & 0 & 2 & 0 & Type-5 \\ 
    \hline
    \end{tabular}
    \caption{The Diophantine equations system solution and its tree surface graph representations for $G=[3,3,3,3]$}
\label{}
\end{table}
for $l=4$ rotational centers, there are maximum $l-2$ additional points, (\ref{additional}). Consider $w=l-2$ maximum additional points added, then the corresponding solution of (\ref{Diophantine1})-(\ref{Diophantine4}), is $\mathrm{A}_1=l$, $\mathrm{A}_i=0$ for $i\neq 1$, and $\mathrm{B}_3=l-2$, $\mathrm{B}_j=0$ for $j\neq 3$. The corresponding inscribed circle radius of each linear Diophantine solution (tree surface graph types) could be described in the next two subsections.
\subsection{The constrained optimum problem in a single equation}
Consider a tree surface graph and its fundamental domain of $G$. Let $R_i$ be a rotational center with $i$ adjacent edges ($i \in \{1, 2, 3, \cdots, l-1\}$). The scissor dissecting in this tree surface graph yields the fundamental domain, particularly the rotational center with $i$ edges are dissected into $i$ identical angles, i.e $\frac{1}{i} \frac{2 \pi}{3}$. Furthermore, the corresponding trigonometric relation formed by right triangle in Fig.\ref{Trigono1} can be written as follows
\begin{align*}
\cos{\left(\frac{\alpha_i}{2}\right)}&=\cosh{x}\sin{\left(\frac{\beta_i}{2}\right)},~\text{it leads to}~
    \cos{\left(\frac{1}{i}  \frac{ \pi}{3} \right)}=\cosh{x} \cdot \sin{\left(\frac{\beta_i}{2}\right)} \end{align*}
    \text{then}~$\beta_i =2\sin^{-1}{\left( \frac{\cos{\left(\frac{1}{i}\frac{\pi}{3}\right)}}{\cosh{x}} \right)}$, for $i=1, 2,\cdots, l-1$.
Particularly, if $i=1$, i.e the rotational center appears as a "leaf" in the tree surface graph, we have
\begin{align}\label{cosh}
    \cosh{x}=\frac{1}{2 \sin{\left( \frac{\beta_1}{2} \right)}}
\end{align}
Remark: The conditions $\cosh{x}>1$ in (\ref{cosh}) affect to the boundness of $\beta_1$, i.e we can define the interval for $\beta_1$, that is $\beta_1 \in (0,\frac{\pi}{3})$.\\
\begin{figure}[h!]
    \begin{center}
        \begin{minipage}[b]{0.45\textwidth}
            \includegraphics[scale=0.35]{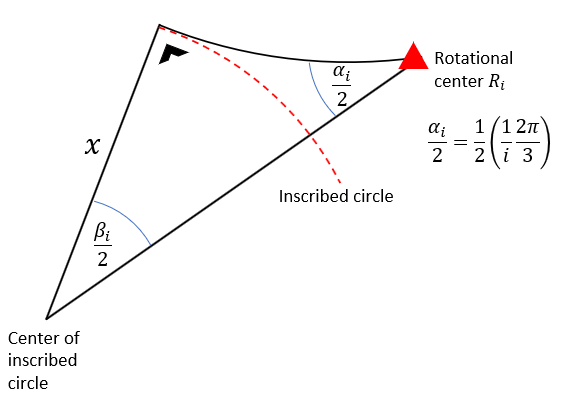}
            \caption{Right triangle with rotational center. For larger $\alpha_i$ we get smaller $\beta_i$.}
            \label{Trigono1}
        \end{minipage}
        \begin{minipage}[b]{0.45\textwidth}
            \includegraphics[scale=0.35]{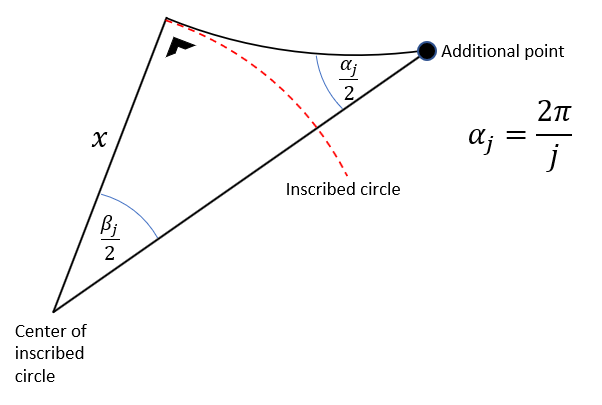}
            \caption{Right triangle with additional point}
            \label{Trigono2}
        \end{minipage}
    \end{center}
\end{figure}
By substituting the expression $\cosh{x}$ (\ref{cosh}) into $\beta_i$'s we obtain 
\begin{equation}\label{beta_i}
    \beta_i=2\sin^{-1}\left( 2 \cos{\left( \frac{1}{i} \frac{\pi}{3} \right)} \sin{\left( \frac{\beta_1}{2} \right)} \right)
\end{equation}
Remark: the argument of $\sin^{-1}$ in (\ref{beta_i}) need to be naturally  on the interval $[-1,1]$ (in this situation $[0,1]$). That means, 
\begin{align*}
   0 \leq  2\cos{\left( \frac{1}{i} \frac{\pi}{3}\right)} \sin{\left( \frac{\beta_1}{2}\right)} \leq 1, ~ \text{for every}~ i=1,\cdots,l-1
\end{align*}
it means $\beta_1$, is bounded i.e.
\begin{align}\label{beta_11}
  0 \leq  \beta_1 \leq 2 \sin^{-1}{\left( \frac{1}{2\cos{\left( \frac{1}{i} \frac{\pi}{3} \right)}} \right)},~\text{for every}~i=1,\cdots,l-1.
\end{align}
It means $\beta_1$ bounded by the least upper bound, i.e. $0 \leq \beta_1 \leq 2\sin^{-1}{\left( \frac{1}{2\cos{\left( \frac{1}{l-1} \frac{\pi}{3} \right)}} \right)}$, for fixed $l \geq 4$.
A similar argumentation is applied in the right triangle with the additional point as vertex see Fig.\ref{Trigono2}. Unlike the rotational center case, in this case, we have $\alpha_j=\frac{2\pi}{j}$
Then the trigonometric relationship in the triangle related to additional points is given by
\begin{equation}\label{beta_j}
    \beta_j=2\sin^{-1}\left( 2 \cos{\left( \frac{\pi}{j} \right)} \sin{\left( \frac{\beta_1}{2} \right)} \right),~\text{for}~j=3, 4,\cdots, l.
\end{equation}
Again, since the argument of $\sin^{-1}$ should be on $[-1,1]$ (in our case $[0,1]$), by the analogous consideration as in (\ref{beta_11}), we have
\begin{align}\label{beta_12}
  0 \leq  \beta_1 \leq 2 \sin^{-1}{\left( \frac{1}{2\cos{\left( \frac{\pi}{j} \right)}} \right)},~\text{for every}~j=3,\cdots,l.
\end{align}
It means we have $0 \leq \beta_1 \leq 2\sin^{-1}{\left( \frac{1}{2\cos{\left( \frac{\pi}{l} \right)}} \right)}$, for fixed $l \geq 4$.
The sum of all central angles of the inscribed circle i.e. $\beta_i$'s and $\beta_j$'s should be equal to $2\pi$, once complete rotation. That is, the following conditions should be fulfilled
for every $\{\mathrm{A}_i;\mathrm{B}_j \}$  solutions of (\ref{Diophantine1})-(\ref{Diophantine4})
\begin{equation}\label{central_angles}
    \sum_{i=1}^{l-1}i\mathrm{A}_i \beta_i + \sum_{j=3}^{l}j\mathrm{B}_j \beta_j = 2\pi
\end{equation}
By substituting $\beta$'s from (\ref{beta_i}) and (\ref{beta_j}) we have a nice relation as follows
\begin{align}
    \sum_{i=1}^{l-1} &i\mathrm{A}_i 2\sin^{-1}\left( 2 \cos{\left( \frac{1}{i} \frac{\pi}{3} \right)} \sin{\left( \frac{\beta_1}{2} \right)} \right) \nonumber  \\ &+ \sum_{j=3}^{l}j\mathrm{B}_j 2\sin^{-1}\left( 2 \cos{\left( \frac{\pi}{j} \right)} \sin{\left( \frac{\beta_1}{2} \right)} \right) = 2\pi.
\end{align}
Note that based on (\ref{beta_11}) and (\ref{beta_12}), $\beta_1$ is defined on 
\begin{equation*}
    0 \leq \beta_1 \leq 2 \sin^{-1}{\left( \frac{1}{2\cos{\left( \frac{1}{l-1} \frac{\pi}{3} \right)}} \right)}
\end{equation*}
In this last equation, we need to find $\beta_1 $ only to determine the corresponding inradius $x$ in each Diophantine solution. For convenience, we write $\beta_1$ as $\beta$, and the upper bound $2 \sin^{-1}{\left( \frac{1}{2\cos{\left( \frac{1}{l-1} \frac{\pi}{3} \right)}} \right)}=:K_l$, for fixed $l \geq 4$. Finally, we formulate our problem concretely as follows:
\begin{lemma}\label{Essential_Lemma}
In each tree surface graphs of $G=[3,3,3,\cdots,3]$ of $l \geq 4$ rotational centers of order 3 there is a Diophantine system (\ref{Diophantine1})-(\ref{Diophantine4}), its solution $\{ \mathrm{A}_i; \mathrm{B}_j \}$ $i=1,\cdots l-1$, $j=3, \cdots l$; and the radius of inscribed circle $x$ is obtained by
\begin{equation}
    \cosh{x}=\frac{1}{2 \sin{\left( \frac{\beta}{2} \right)}}
\end{equation}
where $\beta$ is the root of Equation
\begin{align}\label{h_function}
    \sum_{i=1}^{l-1}&i\mathrm{A}_i 2\sin^{-1}\left( 2 \cos{\left( \frac{1}{i} \frac{\pi}{3} \right)} \sin{\left( \frac{\beta}{2} \right)} \right) \nonumber \\  &+ \sum_{j=3}^{l}j\mathrm{B}_j 2\sin^{-1}\left( 2 \cos{\left( \frac{\pi}{j} \right)} \sin{\left( \frac{\beta}{2} \right)} \right) - 2\pi=0,
\end{align} in the interval $[0, K_l]$, where $K_l=2 \sin^{-1}{\left( \frac{1}{2\cos{\left( \frac{1}{l-1} \frac{\pi}{3} \right)}} \right)}$.
One could observe that the smaller root $\beta$ obtained, the larger inradius $x$ determined. $\square$
\end{lemma}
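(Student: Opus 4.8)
The plan is to read this lemma as the rigorous consolidation of the single--variable reduction carried out just above, so the proof breaks into three verifications: that passing to the single unknown $\beta:=\beta_1$ is always legitimate, that the angle--closure condition (\ref{central_angles}) is literally (\ref{h_function}) after substitution, and that (\ref{h_function}) has exactly one root in $[0,K_l]$ with the stated monotone link to $x$.

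First I would establish that a degree--one rotational center (a leaf) is always present. The tree surface graph has $v=l+w\ge l\ge 4$ vertices, hence at least two leaves; since every additional point has degree at least $3$ by construction, no leaf can be an additional point, so $\mathrm{A}_1\ge 2$. For any such leaf the scissor leaves the full angle $\tfrac{2\pi}{3}$ intact, so its right--triangle relation is $\cos\tfrac{\pi}{3}=\cosh x\,\sin\tfrac{\beta_1}{2}$, which is exactly (\ref{cosh}) and lets me set $\beta:=\beta_1$ and $\cosh x=\tfrac{1}{2\sin(\beta/2)}$. Every remaining central angle is then forced: eliminating $\cosh x$ in the degree--$i$ center relation gives (\ref{beta_i}), and in the degree--$j$ additional--point relation (where the wedge angle is $\tfrac{2\pi}{j}$, so $\cos(\alpha_j/2)=\cos\tfrac{\pi}{j}$) gives (\ref{beta_j}).

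Next I would assemble the global constraint and pin down the domain. A degree--$i$ center is dissected into $i$ congruent wedges, each subtending $\beta_i$, and there are $\mathrm{A}_i$ of them; similarly each degree--$j$ additional point yields $j$ wedges of angle $\beta_j$, with $\mathrm{B}_j$ of them. Summing every wedge once around the incircle is precisely (\ref{central_angles}), and inserting (\ref{beta_i})--(\ref{beta_j}) turns $\sum\beta=2\pi$ into (\ref{h_function}). Confining $\beta$ to $[0,K_l]$ is exactly the demand that every argument of $\sin^{-1}$ in (\ref{h_function}) lie in $[0,1]$: bound (\ref{beta_11}) is sharpest for the largest coefficient $2\cos(\tfrac{1}{l-1}\tfrac{\pi}{3})$, and since $\tfrac{\pi}{3(l-1)}<\tfrac{\pi}{l}$ for $l\ge 4$ this rotational bound is stricter than the additional--point bound (\ref{beta_12}); hence $K_l$ as defined keeps the whole domain admissible.

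Finally I would settle existence, uniqueness and the closing remark. Writing $F(\beta)$ for the left side of (\ref{h_function}), each summand $2\sin^{-1}\!\big(2\cos(\cdot)\sin\tfrac{\beta}{2}\big)$ is strictly increasing on $[0,K_l]$ as a composition of increasing maps, so $F$ is strictly increasing with $F(0)=-2\pi<0$. A root exists because Theorem \ref{ExistenceInball} already furnishes an inscribed circle for this configuration: the polygon's angles sum to $\tfrac{2\pi l}{3}+2\pi w$, which one checks is $<(n-2)\pi$ for $l\ge 4$ with $n=2(l+w-1)$, so the intermediate--value argument of its proof produces a radius and central angles closing to $2\pi$; taking $\beta=\beta_1$ for a leaf yields $F(\beta)=0$, and strict monotonicity makes this root unique. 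The remark is then immediate: on $(0,\tfrac{\pi}{3})$ the map $\beta\mapsto\tfrac{1}{2\sin(\beta/2)}$ is strictly decreasing and $\cosh^{-1}$ is increasing, so a smaller root forces a larger $x$. The only genuinely delicate point is the admissibility bookkeeping --- confirming that the solution supplied by Theorem \ref{ExistenceInball} lands in the interior of $[0,K_l]$ rather than on its boundary --- which holds because $\cosh x>1$ forces $\sin\tfrac{\beta_1}{2}=\tfrac{1}{2\cosh x}<\tfrac12\le\sin\tfrac{K_l}{2}$, hence $\beta_1<K_l$.
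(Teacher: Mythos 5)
Your proposal is correct, and its core --- the leaf relation $\cosh x = \tfrac{1}{2\sin(\beta_1/2)}$, elimination of $\cosh x$ to get the formulas for $\beta_i$ and $\beta_j$, assembly of the closure condition $\sum \beta = 2\pi$ into the single equation, and the $[0,K_l]$ bookkeeping --- is exactly the paper's derivation preceding the lemma (the paper states the lemma as a consolidation of that computation, with $\square$ inside the statement). Where you genuinely diverge is in establishing that the equation has a root: the paper defers this to the proof of Theorem 3 and argues purely analytically, evaluating $h(0)=-2\pi<0$ and bounding $h(K_l)\geq \tfrac{\pi}{3}\bigl(2(l+w-1)\bigr)-2\pi\geq 0$ via $2\cos(\tfrac{\pi}{3i})\sin(\tfrac{K_l}{2})\geq\cos(\tfrac{\pi}{3})$, then invoking the intermediate value theorem and strict monotonicity for uniqueness. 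You instead invoke Theorem \ref{ExistenceInball}: you check the hyperbolic angle condition $\tfrac{2\pi l}{3}+2\pi w<(n-2)\pi$ (equivalent to $l>3$, correctly verified) so that a circumscribing polygon with these angles exists geometrically, and read off the root from its central angles, with the admissibility estimate $\sin(\tfrac{\beta_1}{2})=\tfrac{1}{2\cosh x}<\tfrac12\leq\sin(\tfrac{K_l}{2})$ placing it inside $[0,K_l]$. Your route ties the root to an actual inscribed circle, which is conceptually cleaner and guarantees the geometric realizability that the analytic sign check alone does not address; the paper's route is self-contained and needs no appeal to the earlier existence theorem. You also make explicit two facts the paper uses silently: that every tree here has at least two leaves which must be rotational centers (so $\mathrm{A}_1\geq 2$ and the leaf relation is available), and that the rotational bound at $i=l-1$ is strictly tighter than the additional-point bound at $j=l$ (since $\tfrac{\pi}{3(l-1)}<\tfrac{\pi}{l}$ for $l\geq 4$), so $K_l$ is indeed the least upper bound. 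Both are worthwhile additions; the only blemish is your parenthetical restriction to $(0,\tfrac{\pi}{3})$ in the final monotonicity remark, which is harmless since $\beta\mapsto\tfrac{1}{2\sin(\beta/2)}$ is decreasing on all of $[0,K_l]\subset[0,\pi]$.
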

\subsection{Proof of theorem 3}
By observations in Sect.3, we have seen that the theorem holds in $G=[3,3,3,3]$, $l=4$. Hence, it is sufficient to prove the remaining cases i.e $l\geq 5$.\\
Firstly, we denote the previous function $h$ in (\ref{h_function}), in Lemma \ref{Essential_Lemma} as follows:
For every fixed solution $\{ \mathrm{A}_i, \mathrm{B}_j \}, i=1 \cdots l-1, j=3, \cdots l$ of Diophantine system (\ref{Diophantine1})-(\ref{Diophantine4}), we define a function, extended at the endpoint of its interval  
\begin{align*}
    h:\left[0, K_l \right] \longrightarrow \mathbb{R},~\text{where}~ K_l=2 \sin^{-1}{\left( \frac{1}{2\cos{\left( \frac{1}{l-1} \frac{\pi}{3} \right)}} \right)}, \text{and}~ h~ \text{is defined by}
\end{align*}
\begin{align}
    h(\beta)=\sum_{i=1}^{l-1}&i\mathrm{A}_i 2\sin^{-1}\left( 2 \cos{\left( \frac{1}{i} \frac{\pi}{3} \right)} \sin{\left( \frac{\beta}{2} \right)} \right) \nonumber \\ &+ \sum_{j=3}^{l}j\mathrm{B}_j 2\sin^{-1}\left( 2 \cos{\left( \frac{\pi}{j} \right)} \sin{\left( \frac{\beta}{2} \right)} \right) - 2\pi.
\end{align}
Observe that, $ K_l=2 \sin^{-1}{\left( \frac{1}{2\cos{\left( \frac{1}{l-1} \frac{\pi}{3} \right)}} \right)} < 2 \sin^{-1}{\left( \frac{1}{2\cos{\left( \frac{\pi}{3} \right)}} \right)}=\pi$. Hence, $[0,K_l] \subset [0,\pi]$, in particular $\frac{\beta}{2} \in [0,\frac{K_l}{2}] \subset [0, \frac{\pi}{2}]$.
Note that $h$ is a strictly increasing function in $[0, K_l]$, since $h$ appears as a linear combination of composition terms of $\sin^{-1}$ and $\sin$, where $\sin{(\frac{\beta}{2})}$ increasing on $[0, K_l] \subset [0,\frac{\pi}{2}]$, and also $\sin^{-1}$ is increasing on $[0,1]$.
As Lemma 3 stated, once we solve $h(\beta)=0$ for $\beta$, then the inradius $x$ can be simply computed. In this setting, we want to minimize the root $\beta$. Remark: The existence of the root $\beta$ in $[0, K_l]$ is guaranteed by the continuity of $h$. In fact, $h(0)=-2\pi<0$, and also,
\begin{align*}
h(K_l)&=\sum_{i=1}^{l-1}i\mathrm{A}_i 2\sin^{-1}\left( 2 \cos{\left( \frac{1}{i} \frac{\pi}{3} \right)} \sin{\left( \frac{K_l}{2} \right)} \right)\\ & + \sum_{j=3}^{l}j\mathrm{B}_j 2\sin^{-1}\left( 2 \cos{\left( \frac{\pi}{j} \right)} \sin{\left( \frac{K_l}{2} \right)} \right) - 2\pi\\
&\geq \sum_{i=1}^{l-1}i\mathrm{A}_i 2\sin^{-1}\left( {\cos{(\frac{\pi}{3})}}  \right)  + \sum_{j=3}^{l}j\mathrm{B}_j 2\sin^{-1}\left(  {\cos{(\frac{\pi}{3})}} \right) - 2\pi\\
&=\frac{\pi}{3}\left(\sum_{i=1}^{l-1}i\mathrm{A}_i+\sum_{j=3}^{l}j\mathrm{B}_j \right)-2\pi=\frac{\pi}{3} \left( 2(l-1+w)\right)-2\pi 
\geq 0
\end{align*}
then by the intermediate value theorem, we may validate this claim. Moreover, since $h$ is strictly increasing in $[0,K_l]$, the root is unique in that interval.
Our claim is that the maximum inscribed circle radius $r$ is realized by the solution to Diophantine system whose number of additional points are maximum $w=l-2$. In this situation, the Diophantine system has exactly a unique solution i.e $\mathrm{A}_1=l$, $A_i=0$ for $i=2, \cdots, l-1$ and $B_3=l-2$, $B_j=0$ for $j=4, \cdots, l$.
The corresponding function $h$ is $ h_{l-2}(\beta)=(l+3(l-2))\cdot2 \sin^{-1}(\sin{(\frac{\beta}{2})})-2\pi$ $=(4l-6)\beta-2\pi$.
 Clearly, the root of $h_{l-2}(\beta)=0$ is $\beta_{l-2}=\frac{2\pi}{4l-6}$ that gives the corresponding radius $r_{l-2}=\cosh^{-1}\left( \frac{1}{2 \sin{(\frac{\beta_{l-2}}{2})}}\right)=\cosh^{-1}\left( \frac{1}{2 \sin{(\frac{\pi}{4l-6})}}\right)$, as expressed in the theorem.\\~\\
\textbf{Suppose indirectly} that there exists a solution to Diophantine system with less additional points $w$, $0\leq w < l-2$, say $\{ \mathrm{A}_i^{*},\mathrm{B}_j^{*} \}$ and the corresponding equation $h^*(\beta)=0$,  such that it has a root $\beta^{*}$ whose resulting radius $r^{*}$ is larger than $r_{l-2}$, $r^* > r_{l-2}$. It is equivalent to $\beta^{*} < \beta_{l-2}$. Since $h^*$ is strictly increasing, $0=h^*(\beta^*)<h^*(\beta_{l-2})$, i.e. $h^*(\beta_{l-2})>0$. Meanwhile, we have $h_{l-2}(\beta_{l-2})=0$ already. It would lead to the following inequality
\begin{align*}
     h^*(\beta_{l-2}) > 0, ~~\text{or explicitely}
\end{align*}
\begin{align}\label{48'''}
    \sum_{i=1}^{l-1}&i \mathrm{A}_i^* 2\sin^{-1}\left( 2 \cos{\left( \frac{1}{i} \frac{\pi}{3} \right) } \sin{\left( \frac{\beta_{l-2}}{2} \right)}\right)\\&+\sum_{j=3}^{l}j \mathrm{B}_j^* 2\sin^{-1}\left( 2 \cos{\left(  \frac{\pi}{j} \right) } \sin{\left( \frac{\beta_{l-2}}{2} \right)}\right) \nonumber -2\pi >0,
\end{align}
    as indirect assumption.\\~\\
Substitute $\beta_{l-2}=\frac{2\pi}{4l-6}$ and apply from Appendix the Jensen-type inequalities:
\begin{equation}
    \sin^{-1}\left[ 2 \sin{\left( \frac{\pi}{4l-6} \right)} \cos{\left( \frac{\pi}{3i}\right)}\right] < \frac{3}{\pi}\sin^{-1}\left[ 2 \sin{\left(\frac{\pi}{4l-6} \right)}\right]\left( \frac{\pi}{2}-\frac{\pi}{3i} \right),
\end{equation}
and
\begin{equation}
    \sin^{-1}\left[ 2 \sin{\left( \frac{\pi}{4l-6} \right)} \cos{\left( \frac{\pi}{j}\right)}\right] < \frac{3}{\pi}\sin^{-1}\left[ 2 \sin{\left(\frac{\pi}{4l-6} \right)}\right]\left( \frac{\pi}{2}-\frac{\pi}{j} \right),
\end{equation}
for $i=1,\cdots,l-1$ and $j=3,\cdots,l$.
Therefore, in (\ref{48'''}) the sums will be much simpler, we can refer to the equations (\ref{Diophantine1}-\ref{Diophantine4}) in Diophantine system for $(\mathrm{A}_i^*, \mathrm{B}_j^*)$ and we obtain the following
\begin{align*}
    &\frac{3}{\pi} \sin^{-1} \left[ 2 \sin{\left( \frac{\pi}{4l-6}\right)} \right] \left\{ \frac{\pi}{2} \left[ 2(l+w-1) \right]-\frac{\pi}{3}l-\pi w \right\} -2\pi >0,\\
    &\text{i.e.}~ \sin^{-1}\left[ 2 \sin{\left( \frac{\pi}{4l-6} \right)} \right] \left( 2l-3\right)>2\pi,~\text{then}~2\sin{\left( \frac{\pi}{4l-6} \right)}>\sin{\left(4\cdot \frac{\pi}{4l-6} \right)}.
\end{align*}
Since $l \geq 5$, thus $4l-6 \geq 14$, we get a contradiction in interval $(0, \frac{\pi}{14}]$ by the easy analysis of sine function. $\square$ 
\subsubsection{Remark on the area of fundamental domain $\mathcal{F}_G$}
We have just found the optimum radius of the inscribed circle of $G$. This optimal radius provides the optimum density of the circle into its Fundamental domain $\mathcal{F}_G$ immediately. Since the following observation shows that the area of $\mathcal{F}_G$ is constant for every Diophantine solution. 
The area of the fundamental domain $\mathcal{F}_G$ is proportional to the angle defect $\bigtriangleup$. In fact, $\mathcal{F}_G$ can be dissected into a number of right triangles, as illustrated in Fig \ref{Fundamentaldomain5}, where the area of each right triangle could be simply computed through its defect angle, see Fig.\ref{Trigono1}, \ref{Trigono2}.
Let $\bigtriangleup_i$ be the defect angle of the right triangle about rotational center $R_i$, (Fig. \ref{Trigono1}). Similarly, let $\bigtriangleup_j$ be the defect angle of right triangle about rotational point, (Fig. \ref{Trigono2}). The dissecting of $\mathcal{F}_G$ gives the result
\begin{equation}
    \text{Area}~\mathcal{F}_G=\sum_{i=1}^{l-1} i~\mathrm{A}_i~2~\bigtriangleup_i + \sum_{j=3}^{l} j~\mathrm{B}_j~2~\bigtriangleup_j,
\end{equation}
where $\{\mathrm{A}_i ; \mathrm{B}_j \}$ is a Diophantine solution in \ref{Diophantine1}-\ref{Diophantine4}.\\
Now, by considering Fig. \ref{Trigono1}-\ref{Trigono2}, the defect angles are exactly $\bigtriangleup_i=\pi-\frac{\pi}{2}-\frac{\alpha_i}{2}-\frac{\beta_i}{2}$ and $\bigtriangleup_j=\pi-\frac{\pi}{2}-\frac{\alpha_j}{2}-\frac{\beta_j}{2}$. Also, by Diophantine conditions in (\ref{Diophantine1})-(\ref{Diophantine4}) together with central angle condition in (\ref{central_angles}), we can conclude
\begin{align*}
    \text{Area}~\mathcal{F}_G&=\sum_{i=1}^{l-1} i~\mathrm{A}_i~2~\bigtriangleup_i + \sum_{j=3}^{l} j~\mathrm{B}_j~2~\bigtriangleup_j\\ &
    =2\sum_{i=1}^{l-1} i~\mathrm{A}_i~\left( \frac{\pi}{2}-\frac{\alpha_i}{2}-\frac{\beta_i}{2} \right)+2\sum_{j=3}^{l} j~\mathrm{B}_j~\left( \frac{\pi}{2}-\frac{\alpha_j}{2}-\frac{\beta_j}{2} \right)\\
    &=\pi (2(l+w-1))-\frac{2}{3}\pi l-2 \pi w -2 \pi=\left( \frac{4}{3}l-4 \right)\pi.~~ \square
\end{align*}

\section{Appendix}
 \begin{lemma}
     The upper bounds of ~$\sin^{-1}\left(2\sin{\left( \frac{\pi}{4l-6} \right)} \cos{\left( \frac{\pi}{3i} \right)} \right)$ \\and~ $\sin^{-1}\left(2\sin{\left( \frac{\pi}{4l-6} \right)} \cos{\left( \frac{\pi}{j} \right)} \right)$ is given by
 \begin{align*}
     &\sin^{-1}\left( 2 \sin{\left( \frac{\pi}{4l-6} \right)} \cos{\left( \frac{\pi}{3i}\right)}\right) < \frac{3}{\pi}\sin^{-1}\left( 2 \sin{\left(\frac{\pi}{4l-6} \right)}\right)\left( \frac{\pi}{2}-\frac{\pi}{3i} \right),\\
     &\text{and}\\
     &\sin^{-1}\left( 2 \sin{\left( \frac{\pi}{4l-6} \right)} \cos{\left( \frac{\pi}{j}\right)}\right) < \frac{3}{\pi}\sin^{-1}\left( 2 \sin{\left(\frac{\pi}{4l-6} \right)}\right)\left( \frac{\pi}{2}-\frac{\pi}{j} \right),
 \end{align*}
 for all $i=1 \cdots l-1$, $j=3, \cdots, l$ and $l \geq 5$.
 \end{lemma}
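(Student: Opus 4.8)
The plan is to collapse both inequalities into a single one-variable statement about a concave function and then identify the worst case. Write $s := 2\sin\!\left(\frac{\pi}{4l-6}\right)$ and $a := \sin^{-1}(s)$, and observe first that $l \geq 5$ gives $4l-6 \geq 14$, hence $s \leq 2\sin\!\left(\frac{\pi}{14}\right) < 1$; this keeps every argument of $\sin^{-1}$ strictly below $1$ and places $a \in (0,\frac{\pi}{2})$. In the first family set $\theta = \frac{\pi}{3i}$ and in the second $\theta = \frac{\pi}{j}$; since $i \geq 1$ and $j \geq 3$ one always has $\theta \in (0,\frac{\pi}{3}]$. Substituting $\phi = \frac{\pi}{2}-\theta \in [\frac{\pi}{6},\frac{\pi}{2})$ and using $\cos\theta = \sin\phi$, both claimed inequalities reduce to the single assertion
\[
\sin^{-1}\!\left(s\sin\phi\right) < \tfrac{3a}{\pi}\,\phi, \qquad \phi \in \left[\tfrac{\pi}{6},\tfrac{\pi}{2}\right).
\]

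Next I would establish concavity of $H(\phi) := \sin^{-1}(s\sin\phi)$ on $[0,\frac{\pi}{2}]$. A direct computation gives
\[
H''(\phi) = \frac{(s^2-1)\,s\sin\phi}{\left(1-s^2\sin^2\phi\right)^{3/2}},
\]
where the identity $(s\cos\phi)^2+(s\sin\phi)^2 = s^2$ collapses the numerator; since $s<1$ and $\sin\phi \geq 0$ this is $\leq 0$, so $H$ is concave. Combined with $H(0)=0$, concavity forces the ratio $\phi \mapsto H(\phi)/\phi$ to be nonincreasing on $(0,\frac{\pi}{2}]$ by the standard chord argument ($H(\phi_1) \geq \frac{\phi_1}{\phi_2}H(\phi_2)$ whenever $\phi_1 < \phi_2$). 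Hence over the relevant range the ratio is maximal at the left endpoint, giving $H(\phi) \leq \frac{6}{\pi}H(\frac{\pi}{6})\,\phi = \frac{6}{\pi}\sin^{-1}(s/2)\,\phi$ for all $\phi \in [\frac{\pi}{6},\frac{\pi}{2})$. Conceptually this pins the extremal case at $\theta = \frac{\pi}{3}$, i.e. at $i=1$ and $j=3$.

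It then remains to compare the two linear bounds, that is to show $\frac{6}{\pi}\sin^{-1}(s/2) < \frac{3}{\pi}\sin^{-1}(s)$, equivalently $2\sin^{-1}(s/2) < \sin^{-1}(s)$. Applying $\sin$ is legitimate because $2\sin^{-1}(s/2) = \frac{\pi}{2l-3} \leq \frac{\pi}{7} < \frac{\pi}{2}$, and it yields $\sin\!\left(2\sin^{-1}(s/2)\right) = s\sqrt{1-s^2/4} < s$; monotonicity of $\sin^{-1}$ then gives the claim. Chaining $H(\phi) \leq \frac{6}{\pi}\sin^{-1}(s/2)\,\phi < \frac{3a}{\pi}\,\phi$ (the first step an equality only at $\phi=\frac{\pi}{6}$, where the second is strict) produces exactly the required strict inequality.

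The routine part is the second-derivative computation; the point to stay alert about is the domain bookkeeping — one must keep $s\sin\phi \leq s < 1$ so that $H$, $H'$, $H''$ are defined on the closed interval, and keep $2\sin^{-1}(s/2) < \frac{\pi}{2}$ so that the final $\sin$-step is reversible, both guaranteed precisely by $l \geq 5$. The genuine conceptual obstacle, and the reason the restriction $\theta \leq \frac{\pi}{3}$ cannot be dropped, is that the inequality actually \emph{fails} as $\theta \to \frac{\pi}{2}$: there $H(\phi)/\phi \to s = \sin a > \frac{3a}{\pi}$. The monotonicity of $H(\phi)/\phi$ is exactly what confines this failure to the excluded region and lets a single endpoint evaluation at $\phi = \frac{\pi}{6}$ settle all cases at once.
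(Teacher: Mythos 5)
Your proof is correct, but it takes a genuinely different route from the paper's. After the common setup (write $s = 2\sin\left(\frac{\pi}{4l-6}\right)$ and $\theta \in \left\{\frac{\pi}{3i}, \frac{\pi}{j}\right\} \subset \left(0,\frac{\pi}{3}\right]$), the paper never differentiates anything: it exploits the convexity of $\sin^{-1}$ on $[0,1]$ through the increasing-secant-slope property $\frac{\sin^{-1}(x_1)}{x_1} < \frac{\sin^{-1}(x_2)}{x_2}$ for $x_1 < x_2$, applied twice --- once with $x_1 = s\cos\theta < x_2 = s$ to get $\sin^{-1}(s\cos\theta) < \cos\theta \, \sin^{-1}(s)$, and once with $\cos\theta \geq \frac{1}{2}$ to get $\cos\theta \leq \frac{3}{\pi}\sin^{-1}(\cos\theta) = \frac{3}{\pi}\left(\frac{\pi}{2}-\theta\right)$; chaining the two gives the claim with no calculus. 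You instead absorb the whole left-hand side into the single function $H(\phi) = \sin^{-1}(s\sin\phi)$ with $\phi = \frac{\pi}{2}-\theta$, prove $H'' \leq 0$ by direct computation, convert concavity plus $H(0)=0$ into monotonicity of the ratio $H(\phi)/\phi$, and finish with the endpoint comparison $2\sin^{-1}(s/2) < \sin^{-1}(s)$, proved via the double-angle formula and the observation $2\sin^{-1}(s/2) = \frac{\pi}{2l-3} < \frac{\pi}{2}$. Both arguments rest on the same chord-slope (Jensen-type) principle, but applied to different functions: the paper to $\sin^{-1}$ itself, you to the composite $H$. The paper's version is shorter and derivative-free; yours buys a sharper structural picture --- it isolates the extremal case $i=1$, $j=3$ (i.e.\ $\phi = \frac{\pi}{6}$) and makes explicit why the hypothesis $\theta \leq \frac{\pi}{3}$ cannot be dropped, since $H(\phi)/\phi \to s > \frac{3}{\pi}\sin^{-1}(s)$ as $\phi \to 0$ for the values of $s$ at hand. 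Your domain bookkeeping ($s \leq 2\sin\frac{\pi}{14} < 1$ for $l \geq 5$, all angles kept in $\left[0,\frac{\pi}{2}\right]$ so the final $\sin$ step is reversible) is in order, so the argument is complete.
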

 
 \begin{proof}
 We will provide the proof for the first inequality, then the second inequality can be proven in a similar way.
 Consider \begin{align*}
     2\sin{\left(\frac{\pi}{4l-6} \right)}\cos{\left( \frac{\pi}{3i} \right)} < 2\sin{\left(\frac{\pi}{4l-6} \right)} < \cos{\left( \frac{\pi}{3i} \right)},~ \text{for all}~ i=1,\cdots l-1,~ l \geq 5
 \end{align*}
Since $\sin^{-1}$ is increasing in $(0,1]$, then we have
\begin{equation*}
\sin^{-1}\left( 2\sin{\left(\frac{\pi}{4l-6} \right)}\cos{\left( \frac{\pi}{3i} \right)}\right) < \sin^{-1}\left(2\sin{\left(\frac{\pi}{4l-6} \right)} \right) < \sin^{-1}\left( \cos{\left( \frac{\pi}{3i} \right)} \right).
\end{equation*}
Since $\sin^{-1}$ is concave up, then the slope of its secant line through the origin $(0,0)$ and $(x, \sin^{-1}(x))$ is increasing, that is $\frac{\sin^{-1}(x_1)}{x_1} < \frac{\sin^{-1}(x_2)}{x_2},~\text{if}~x_1<x_2$.
Therefore,
\begin{equation*}
    \frac{\sin^{-1}\left( 2\sin{\left(\frac{\pi}{4l-6} \right)}\cos{\left( \frac{\pi}{3i} \right)}\right)}{2\sin{\left(\frac{\pi}{4l-6} \right)}\cos{\left( \frac{\pi}{3i} \right)}} < \frac{\sin^{-1}\left(2\sin{\left(\frac{\pi}{4l-6} \right)} \right) }{2\sin{\left(\frac{\pi}{4l-6} \right)}} < \frac{\sin^{-1}\left( \cos{\left( \frac{\pi}{3i} \right)} \right)}{\cos{\left( \frac{\pi}{3i} \right)}}.
\end{equation*}
 Now, we multiply all (positive) sides in the inequality by\\ (positive) $2\sin{\left(\frac{\pi}{4l-6} \right)}\cos{\left( \frac{\pi}{3i} \right)}$ to have
 \begin{align*}
     \sin^{-1}\left( 2\sin{\left(\frac{\pi}{4l-6} \right)}\cos{\left( \frac{\pi}{3i} \right)}\right) &<\sin^{-1}\left(2\sin{\left(\frac{\pi}{4l-6} \right)} \right) \cdot \cos{\left( \frac{\pi}{3i} \right)}\\
     &< 2\sin{\left(\frac{\pi}{4l-6} \right)} \cdot \sin^{-1}\left( \cos{\left( \frac{\pi}{3i} \right)} \right) .
 \end{align*}
 Hence, we have
 \begin{align*}
     \frac{\sin^{-1}\left( 2\sin{\left(\frac{\pi}{4l-6} \right)}\cos{\left( \frac{\pi}{3i} \right)}\right)}{2\sin{\left(\frac{\pi}{4l-6} \right)} \cdot \sin^{-1}\left( \cos{\left( \frac{\pi}{3i} \right)} \right)} < \frac{\sin^{-1}\left(2\sin{\left(\frac{\pi}{4l-6} \right)} \right) \cdot \cos{\left( \frac{\pi}{3i} \right)}}{2\sin{\left(\frac{\pi}{4l-6} \right)} \cdot \sin^{-1}\left( \cos{\left( \frac{\pi}{3i} \right)} \right)}.
 \end{align*}
 Note that $\cos{\left( \frac{\pi}{3i} \right)} \geq \cos{\left( \frac{\pi}{3} \right)}$ for all $i=1 \cdots l-1$. Since $\sin^{-1}$ is concave up, then we have $\frac{\sin^{-1}\left( \cos{\left( \frac{\pi}{3i}\right)} \right)}{\cos{\left( \frac{\pi}{3i}\right)}} \geq \frac{\sin^{-1}\left( \cos{\left( \frac{\pi}{3} \right)} \right)}{\cos{\left( \frac{\pi}{3} \right)}}=\frac{\pi}{3}$. Therefore, $\frac{\cos{\left( \frac{\pi}{3i}\right)}}{\sin^{-1}\left( \cos{\left( \frac{\pi}{3i}\right)} \right)} \leq \frac{3}{\pi}$.
 Then we have
 \begin{align*}
     &\frac{\sin^{-1}\left( 2\sin{\left(\frac{\pi}{4l-6} \right)}\cos{\left( \frac{\pi}{3i} \right)}\right)}{2\sin{\left(\frac{\pi}{4l-6} \right)} \cdot \sin^{-1}\left( \cos{\left( \frac{\pi}{3i} \right)} \right)} < \frac{3}{\pi} \frac{\sin^{-1}\left(2\sin{\left(\frac{\pi}{4l-6} \right)} \right)}{2\sin{\left(\frac{\pi}{4l-6} \right)}}.\\
     &\text{By simplifying then}\\
     &\sin^{-1}\left( 2\sin{\left(\frac{\pi}{4l-6} \right)}\cos{\left( \frac{\pi}{3i} \right)}\right) < \frac{3}{\pi} \sin^{-1}\left(2\sin{\left(\frac{\pi}{4l-6} \right)} \right) \sin^{-1}\left( \cos{\left( \frac{\pi}{3i} \right)} \right)\\
     &\text{since}~ \sin^{-1}\left( \cos{\left( \frac{\pi}{3i} \right)} \right)=\left( \frac{\pi}{2}-\frac{\pi}{3i} \right), \text{then}\\
     &\sin^{-1}\left( 2\sin{\left(\frac{\pi}{4l-6} \right)}\cos{\left( \frac{\pi}{3i} \right)}\right)
     <\frac{3}{\pi} \sin^{-1}\left(2\sin{\left(\frac{\pi}{4l-6} \right)} \right) \left( \frac{\pi}{2}-\frac{\pi}{3i} \right)
 \end{align*}
 as we claimed.
 \end{proof}
\section*{Acknowledgement}
I am very grateful to Prof. Emil Molnar for a bunch of meaningful mathematical discussions. I am also really thankful to Dr. Jen\H{o} Szirmai who guided my doctorate studies in Budapest. 


\end{document}